\newtheorem{theorem}{Theorem}[section]
\newtheorem{prop}[theorem]{Proposition}
\newtheorem{question}[theorem]{Question}
\newtheorem{lemma}[theorem]{Lemma}
\newtheorem*{main-theorem}{Main Theorem}
\theoremstyle{definition}
\newtheorem{remark}[theorem]{Remark}
\newtheorem{example}[theorem]{Example}
\numberwithin{equation}{section}
\newcommand{\cc}{\mathbb{C}}
\newcommand{\nn}{\mathbb{N}}
\newcommand{\pp}{\mathbb{P}}
\newcommand{\qq}{\mathbb{Q}}
\newcommand{\rr}{\mathbb{R}}
\newcommand{\zz}{\mathbb{Z}}
\newcommand{\gp}{\text{gp}}
\newcommand{\uu}{\mathcal{U}}
\providecommand\ldb{\llbracket}
\providecommand\rdb{\rrbracket}
\keywords{monoid algebra, atomicity, atomic monoid, factorization, finite rank-monoid, maximal common divisors}
\subjclass[2020]{Primary: 13F15, 13A05, 20M25; Secondary: 06F05, 11Y05, 13G05}
\begin{document}
	
	\mbox{}
	\title{On the ascent of atomicity to monoid algebras}
	
	\author{Felix Gotti}
	\address{Department of Mathematics\\MIT\\Cambridge, MA 02139}
	\email{fgotti@mit.edu}
	
	\author{Henrick Rabinovitz}
	\address{Department of Mathematics\\MIT\\Cambridge, MA 02139}
	\email{hrab@mit.edu}
	
\date{\today}

\begin{abstract}	 
	 A commutative cancellative monoid is atomic if every non-invertible element factors into irreducibles (also called atoms), while an integral domain is atomic if its multiplicative monoid is atomic. Back in the eighties, Gilmer posed the question of whether the fact that a torsion-free monoid~$M$ and an integral domain $R$ are both atomic implies that the monoid algebra $R[M]$ of $M$ over $R$ is also atomic. In general this is not true, and the first negative answer to this question was given by Roitman in 1993: he constructed an atomic integral domain whose polynomial extension is not atomic. More recently, Coykendall and the first author constructed finite-rank torsion-free atomic monoids whose monoid algebras over certain finite fields are not atomic. Still, the ascent of atomicity from finite-rank torsion-free monoids to their corresponding monoid algebras over fields of characteristic zero is an open problem. Coykendall and the first author also constructed an infinite-rank torsion-free atomic monoid whose monoid algebras (over any integral domain) are not atomic. As the primary result of this paper, we construct a rank-one torsion-free atomic monoid whose monoid algebras (over any integral domain) are not atomic. To do so, we introduce and study a methodological construction inside the class of rank-one torsion-free monoids that we call lifting, which consists in embedding a given monoid into another monoid that is often more tractable from the arithmetic viewpoint. For instance, the embedding in the lifting construction preserves the ascending chain condition on principal ideals and the existence of maximal common divisors.
\end{abstract}
\medskip

\maketitle

%\tableofcontents

\bigskip
%%%%%%%%%%%
%%%%%%%%%%%
\section{Introduction}
\label{sec:intro}
\smallskip

A monoid is atomic if every non-invertible element factors into atoms/irreducibles (monoids here are tacitly assumed to be cancellative and commutative), while an integral domain is atomic if its multiplicative monoid is atomic. It is well known and routine to argue that every monoid/domain that satisfies the ascending chain condition on principal ideals (ACCP) is atomic. Given a monoid $M$ and an integral domain $R$, we let $R[M]$ denote the monoid algebra of $M$ over $R$, that is, the commutative ring consisting of all polynomial expressions with exponents in $M$ and coefficients in $R$ (it is well known that $R[M]$ is an integral domain if and only if~$M$ is a torsion-free monoid). The ascent of atomicity from the pair $(M,R)$ to the monoid algebra $R[M]$ is a delicate subject, which was first brought to attention by Gilmer back in the eighties (see \cite[page~189]{rG84}).
\begin{question} \label{quest:Gilmer atomicity}
	Given a torsion-free monoid $M$ and an integral domain $R$, does the fact that both $M$ and~$R$ are atomic suffice to guarantee that $R[M]$ is atomic?
\end{question}
\noindent In general, it is known that Question~\ref{quest:Gilmer atomicity} has a negative answer. The search for counterexamples motivated by this question has resulted in the development of methodologies and the introduction of certain notions that have been used in other contexts. For instance, the work on maximal common divisors (MCDs) developed by Roitman in~\cite{mR00} was later modified and used in~\cite[Theorem~2.5]{EK18} to embed an integral domain into an idf-domain, while the existence of MCDs was used in \cite[Theorem~3.1]{GP23} to provide a positive answer for the ascent of atomicity to polynomial semidomains. In addition, the monoid algebra constructed in~\cite[Theorem~5.4]{CG19} by Coykendall and the first author was later used in~\cite[Example~4.10]{GZ23} as a counterexample of a different nature.
\smallskip

The primary purpose of this paper is to settle down a still open problem stemming from Question~\ref{quest:Gilmer atomicity}, namely, whether the property of being atomic ascends from finite-rank torsion-free monoids to their corresponding monoid algebras over integral domains (this problem has only been resolved when the ring of coefficients is a finite field of prime cardinality). As important as solving this open problem is the machinery we develop along the way. Here we provide a methodological construction inside the class consisting of all rank-one torsion-free monoids that we call the `lifting construction'. This construction consists in embedding a given monoid into another monoid, called a `lifting monoid', which is often more tractable from the arithmetic viewpoint. Besides being a crucial component used in the proof of our main theorem, the lifting construction can be used to give an answer to the recent question of whether there exists a rank-one torsion-free atomic monoid that is not strongly atomic \cite[Question~4.4]{GV23}. We should emphasize that a positive answer to the same question was previously given in~\cite{GLRRT24} using an \emph{ad~hoc} construction.
\smallskip

The Hilbert-Basis-type question of whether the atomicity of an integral domain guarantees the atomicity of its polynomial extension is a specialization of Question~\ref{quest:Gilmer atomicity}, which was posed by Anderson, Anderson, and Zafrullah \cite[Question~1]{AAZ90} back in 1990. The first (negative) answer was given by Roitman in 1993: in the corresponding paper, he developed certain machinery on extensions of Rees algebras and existence of maximal common divisors that allowed him to construct two examples of atomic integral domains whose polynomial extensions are not atomic \cite[Examples~5.1 and~5.2]{mR93}. As of now, it seems that these are the only known examples of atomic integral domains whose polynomial extensions are not atomic (the integral domains constructed by Grams \cite[Theorem~1.3]{aG74} are perhaps the most tractable examples of atomic integral domains that do not satisfy the ACCP; however, their polynomial extensions have been proved to be atomic by Li and the first author \cite[Proposition~3.6]{GL22}). The ascent of atomicity from integral domains to their corresponding power series extensions was also settled down by Roitman: he constructed in~\cite[Example~1.4]{mR00} an atomic integral domain whose power series extension is not atomic.
\smallskip

In terms of the general problem encapsulated in Question~\ref{quest:Gilmer atomicity}, the dual question of whether the polynomial extension of an atomic integral domain is also atomic is that of whether the monoid algebra $F[M]$ of an atomic monoid~$M$ is also atomic over some/any field $F$. Unlike its dual, which was settled back in 1993, this latter question was not settled until recently: Coykendall and the first author constructed two examples of finite-rank torsion-free atomic monoids whose monoid algebras are not atomic over certain finite fields \cite[Theorems~4.2 and~5.4]{CG19}. In the same paper, the authors established the following result.

\begin{theorem} \label{thm:CG Universal Monoid}
	There exists a reduced infinite-rank torsion-free atomic monoid $M$ such that the monoid algebra $R[M]$ is not atomic for any integral domain~$R$.
\end{theorem}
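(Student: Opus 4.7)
The strategy is to build $M$ as an infinite direct sum $M = \bigoplus_{n \geq 1} M_n$ of rank-one, torsion-free, reduced, atomic monoids. Since the atoms of a direct sum are precisely the atoms of its summands, and every element of the direct sum has finite support, such an $M$ is automatically torsion-free, reduced, of infinite rank, and atomic. The real task is then to engineer the summands so that $R[M]$ contains a single element whose failure to factor into atoms is forced by identities valid over every integral domain.

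For the summands I would take each $M_n$ to be a Puiseux monoid in $\qq_{\geq 0}$ that is atomic but nonetheless contains a designated element $\omega_n$ admitting a large family of scalar-multiple divisors: pairs $(k_{n,j}, \beta_{n,j})$ with $k_{n,j} \in \nn_{\geq 2}$, $\beta_{n,j} \in M_n$, and $\omega_n = k_{n,j}\, \beta_{n,j}$. Each such divisibility triggers, over any coefficient ring $R$, the universal cyclotomic-type factorization
\[
1 - x^{\omega_n} \;=\; (1 - x^{\beta_{n,j}})\bigl(1 + x^{\beta_{n,j}} + x^{2\beta_{n,j}} + \cdots + x^{(k_{n,j}-1)\beta_{n,j}}\bigr)
\]
in $R[M_n]$. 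The combinatorial calibration to make is that extracting one such factor in coordinate $n$ should leave a remainder whose further nontrivial factorization must invoke the analogous identity in a deeper coordinate; the independence of coordinates in the direct sum is what allows this chaining.

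The witness $f \in R[M]$ would then be chosen so that, by iterating the identities above, it admits for every $N \geq 1$ a nontrivial factorization of the form $f = (1 - x^{\gamma_1}) \cdots (1 - x^{\gamma_N}) \cdot g_N$ with the $\gamma_i$ supported in distinct coordinates and $g_N$ a non-unit. Any hypothetical atomic factorization $f = f_1 \cdots f_r$ could then be projected onto finitely many coordinates, forcing one of the $f_i$'s to absorb a problematic divisibility in a deeper coordinate, which contradicts its irreducibility.

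The main obstacle is proving non-atomicity \emph{uniformly} in $R$. The earlier finite-rank counterexamples of \cite{CG19} exploited characteristic-specific phenomena (e.g., the Frobenius in positive characteristic), and such tools are unavailable here. The hard step is therefore to extract the failure of atomicity purely from universal polynomial identities. This is what dictates both the Puiseux structure of the summands — so that scalar divisibility $\omega_n = k \beta_{n,j}$ lives inside the monoid itself rather than in the coefficient ring — and the infinite-rank direct-sum design — so that each new factorization can be routed into a fresh, independent coordinate, producing the infinite descent that precludes any finite atomic factorization of $f$.
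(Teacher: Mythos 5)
A preliminary note: the paper does not prove this statement; it is quoted as Theorem~5.4 of \cite{CG19} and serves only as the starting point that the Main Theorem improves from infinite rank to rank one, so your attempt must stand on its own merits. It has a structural flaw that defeats its central mechanism. You propose a witness $f$ that for every $N$ factors as $f = (1 - x^{\gamma_1}) \cdots (1 - x^{\gamma_N})\, g_N$ with the $\gamma_i$ in pairwise distinct coordinates, so that any putative atomic factorization of $f$ is forced into ever deeper coordinates. But in a direct sum $M = \bigoplus_{n \geq 1} M_n$ this cannot happen. Any $f \in R[M]$ has finite support, say contained in $M_1 \oplus \cdots \oplus M_k$, and every factorization of $f$ already stays inside $R[M_1 \oplus \cdots \oplus M_k]$: for $j > k$, consider the $j$-degree $\deg_j$, the maximum of the $j$-th coordinates of the exponents occurring. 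Since each $M_j$ is torsion-free and $R[M]$ is an integral domain, the top $\deg_j$-slice of a product $gh$ is the product of the corresponding top slices of $g$ and $h$ and does not vanish, so $\deg_j(g) + \deg_j(h) = \deg_j(f) = 0$ forces $\deg_j(g) = \deg_j(h) = 0$. There is no ``deeper coordinate'' for the descent to enter; in particular, your proposed partial factorization with $\gamma_{k+1}, \dots, \gamma_N$ in fresh coordinates has positive $\deg_{k+1}$-degree and cannot equal $f$.

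This also shows the infinite rank is not doing the work you assign to it, and the cyclotomic identities within a single coordinate do not by themselves preclude an atomic factorization there --- the telescoping quotient $1 + x^\beta + \cdots + x^{(k-1)\beta}$ may itself be an atom or factor into atoms, and you give no reason the descent fails to terminate. The descent that actually works, both in \cite{CG19} and in the Main Theorem proved here, is of a different kind: one builds an atomic monoid containing two elements $b, c$ with no maximal common divisor, takes $X^b + X^c$ as witness, and shows that any common monomial factor $X^d$ can always be properly enlarged, together with a careful argument (the bulk of the work) that no essentially different factorization can occur. It is that second step, not universal polynomial identities, where the uniformity over all integral domains $R$ is earned.
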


\noindent Over fields of characteristic zero, it is still an open problem whether the property of being atomic ascends from finite-rank torsion-free monoids to their corresponding monoid algebras. A negative answer to this open problem is given by the following theorem, which is the main result we establish in this paper and an improvement of Theorem~\ref{thm:CG Universal Monoid}.
\smallskip

\begin{main-theorem}
	There exists a rank-one torsion-free atomic monoid $M$ such that the monoid algebra $R[M]$ is not atomic for any integral domain~$R$.
\end{main-theorem}
\smallskip

In Section~\ref{sec:background}, we revise the relevant notions and terminology we will use throughout this paper.
\smallskip

In Section~\ref{sec:lifting construction}, we introduce the lifting construction, which is the most important construction we will use later to prove our main theorem. A lifting monoid depends not only on the initial rank-one torsion-free monoid but also on the choice of certain function whose domain is a prescribed subset of the initial monoid. The choice of this function, which we call a `lifting function', plays a central role in the arithmetic properties of the final lifting monoid. The lifting construction is significantly motivated by the rank-one torsion-free monoid in Grams' construction of the first atomic integral domain not satisfying the ACCP. As it is the case for the monoid in Grams' construction, inside the lifting monoids the elements have certain canonical sum decompositions, and this is the most crucial property making our lifting construction so convenient. We will prove the existence and uniqueness of such a canonical decomposition right after describing the notion of lifting. Throughout the rest of the same section, we establish some needed facts about the lifting construction in terms of atomicity and principal ideals: we describe the set of atoms of the lifting monoid, argue that it is atomic if the initial monoid is atomic, and show that the ACCP is preserved by the lifting construction.
\smallskip

In Section~\ref{sec:ascent of atomicity}, we primarily focus on establishing our main theorem. Once we prove at the beginning of the section that the existence of certain maximal common divisors is preserved by the lifting construction, our proof of the main theorem can be naturally divided into the following three fundamental parts. First, we construct a rank-one torsion-free monoid that contains two elements without a maximal common divisor. Then we carefully choose a lifting function to lift the constructed monoid to a rank-one torsion-free atomic monoid that also contains two elements without a maximal common divisor. Finally, we use the properties of the resulting lifting monoid to argue that its monoid algebras are not atomic (over any integral domains). An integral domain is called Furstenberg if every nonzero nonunit is divisible by an irreducible (it follows directly from the definition that every atomic integral domain is Furstenberg). We conclude Section~\ref{sec:ascent of atomicity} showing that if the rings of coefficients of the monoid algebras constructed in the proof of our main theorem are taken to be fields, then such monoid algebras are Furstenberg domains. As a consequence, we obtain the first known examples of one-dimensional Furstenberg integral domains that are not atomic.

\bigskip
%%%%%%%%%%%
%%%%%%%%%%%
\section{Background}
\label{sec:background}

\smallskip
%%%%%%%%%%%%%%%%
\subsection{General Notation}
\smallskip

Following usual conventions, we let $\zz$, $\qq$, and $\rr$ denote the set of integers, rational numbers, and real numbers, respectively. We let $\nn$ and $\nn_0$ denote the set of positive and nonnegative integers, respectively. In addition, we let $\pp$ denote the set of primes. For $b,c \in \zz$ with $b \le c$, we let $\ldb b, c \rdb$ denote the set of integers between $b$ and $c$; that is, $\ldb b,c \rdb := \{m \in \zz : b \le m \le c\}$. Also, for $S \subseteq \rr$ and $r \in \rr$, we set $S_{\ge r} := \{s \in S : s \ge r\}$, and we define $S_{> r}$, $S_{\le r}$ and $S_{< r}$ in a similar manner. We use $\sqcup$ instead of $\cup$ to emphasize when we are taking disjoint union of sets. For $q \in \qq_{>0}$,  the unique relatively prime positive integers $n$ and $d$ satisfying that $q = \frac nd$ are denoted here by $\mathsf{n}(q)$ and $\mathsf{d}(q)$, respectively. For each $p \in \pp$, we let $v_p \colon \qq^\times \to \zz$ denote the $p$-adic valuation map.

\medskip
%%%%%%%%%%%%%%%%%%%
\subsection{Commutative Monoids}

A \emph{monoid} is a semigroup with an identity element. However, in the context of this paper we tacitly assume that every monoid we deal with is both cancellative and commutative. Let $M$ be a monoid written additively. We let $M^\bullet$ denote the set of nonzero elements of $M$, and we call~$M$ \emph{trivial} provided that $M^\bullet$ is empty. The invertible elements of $M$, also called \emph{units}, form a subgroup of $M$, which we denote by $\uu(M)$. The monoid $M$ is called \emph{reduced} if the group $\uu(M)$ is trivial. The \emph{difference group} of $M$ (often called the \emph{Grothendieck group} of $M$) is the unique abelian group $\gp(M)$ up to isomorphism satisfying that any abelian group containing an isomorphic image of $M$ also contains an isomorphic image of $\gp(M)$. The monoid $M$ is called \emph{torsion-free} if $\gp(M)$ is a torsion-free group. The \emph{rank} of the monoid $M$, denoted by $\text{rank} \, M$, is the rank of $\gp(M)$ as a $\zz$-module, that is, the dimension of the vector space $\qq \otimes_\zz \gp(M)$ over~$\qq$. It is well known that every rank-one torsion-free monoid is isomorphic to an additive submonoid of $\qq$ (see~\cite[Section~24]{lF70} and \cite[Theorem~3.12]{GGT21}). When these monoids are not nontrivial groups, they can be realized as additive monoids consisting of nonnegative rationals, in which case they are known as \emph{Puiseux monoids}. The atomic structure and the arithmetic of factorizations of Puiseux monoids and their monoid algebras have been actively investigated in recent years (see~\cite{CJMM24,GG24,GL23} and references therein). Puiseux monoids play a crucial role throughout this paper. For a subset $S$ of $M$, we let $\langle S \rangle$ denote the submonoid of $M$ \emph{generated} by~$S$, namely, the smallest submonoid of $M$ containing $S$. The monoid~$M$ is called \emph{finitely generated} if $M = \langle S \rangle$ for some finite subset $S$ of $M$.
\smallskip

For $b,c \in M$, we say that $c$ \emph{divides} $b$ (\emph{in} $M$) if there exists $d \in M$ such that $b = c + d$; in this case, we write $c \mid_M b$. An element $d \in M$ is a \emph{common divisor} of a nonempty subset $S$ of $M$ provided that $d \mid_M s$ for all $s \in S$. A \emph{maximal common divisor} (MCD) of a nonempty subset $S$ of $M$ is a common divisor $d$ of~$S$ such that for any other common divisor $d' \in M$ of $S$ the divisibility relation $d \mid_M d'$ implies that $d' - d \in \uu(M)$. Following Roitman~\cite{mR93}, we say that $M$ is a $k$-\emph{MCD monoid} for some $k \in \nn$ if every subset $S$ of $M$ with $|S| = k$ has an MCD. It follows directly from the definition that every monoid is a $1$-MCD monoid and also that, for each $k \in \nn$, every $(k+1)$-MCD monoid is a $k$-MCD monoid. For each $k \in \nn$, there are $k$-MCD monoids that are not $(k+1)$-MCD monoids (see~\cite[Example~5.2]{mR93}). The monoid $M$ is called an \emph{MCD monoid} if $M$ is a $k$-MCD monoid for every $k \in \nn$.
\smallskip

A non-invertible element $a \in M$ is called an \emph{atom} of $M$ if whenever $a = b+c$ for some $b,c \in M$, either $b \in \uu(M)$ or $c \in \uu(M)$. We let $\mathcal{A}(M)$ denote the set consisting of all the atoms of~$M$. If $\mathcal{A}(M)$ is empty, then $M$ is called \emph{antimatter}. If every non-invertible element of $M$ is divisible by an atom, then $M$ is called a \emph{Furstenberg monoid}. An element of~$M$ is called \emph{atomic} if it is invertible or it can be written as a sum of finitely many atoms (allowing repetitions). Following Cohn~\cite{pC68}, we say that the monoid $M$ is \emph{atomic} if every element of $M$ is atomic. It follows directly from the definitions that every atomic monoid is a Furstenberg monoid. Following Anderson, Anderson, and Zafrullah~\cite{AAZ90}, we say that $M$ is \emph{strongly atomic} if $M$ is an atomic $2$-MCD monoid. 
\smallskip

A subset $I$ of $M$ is called an \emph{ideal} of~$M$ provided that the set $I + M := \{b+c : b \in I \text{ and } c \in M\}$ is contained in $I$ or, equivalently, $I+M = I$. An ideal of the form $b + M := \{b + c : c \in M\}$ for some $b \in M$ is called \emph{principal}. A sequence of principal ideals of $M$ is said \emph{to start} at the element $b \in M$ if the first term of the sequence is the principal ideal $b + M$. Also, an element $b \in M$ is said to satisfy the \emph{ascending chain condition on principal ideals} (ACCP) if every ascending chain of principal ideals starting at $b$ eventually stabilizes. The monoid $M$ is said to satisfy the {ACCP} if every element of $M$ satisfies the ACCP. It is well known and routine to verify that every monoid that satisfies the ACCP is atomic (see \cite[Proposition~1.1.4]{GH06}). The converse of this statement does not hold in general (see Example~\ref{ex:Grams monoid}).
\smallskip

Now assume that the monoid $M$ is atomic. One can readily check that the quotient $M/\uu(M)$ is also an atomic monoid. Let $\mathsf{Z}(M)$ denote the free (commutative) monoid on the set $\mathcal{A}(M/\uu(M))$, and let $\pi \colon \mathsf{Z}(M) \to M/\uu(M)$ be the unique monoid homomorphism that fixes every element of the set $\mathcal{A}(M/\uu(M))$. For every $b \in M$, we set $\mathsf{Z}(b) := \mathsf{Z}_M(b) := \pi^{-1} (b + \uu(M))$, and we call the elements of $\mathsf{Z}(b)$ \emph{additive factorizations} or, simply, \emph{factorizations} of~$b$. A recent survey on factorization theory in commutative monoids by Geroldinger and Zhong can be found in~\cite{GZ20}.

\medskip
%%%%%%%%%%%%%%%%
\subsection{Monoid Algebras} 

Let $R$ be a commutative ring with identity. We let $R^*$ denote the multiplicative monoid of~$R$, that is, the monoid consisting of all non-zero-divisors of~$R$. When $R$ is an integral domain, we say that $R$ is \emph{Furstenberg} (resp., \emph{atomic}, \emph{strongly atomic}) provided that the monoid $R^*$ is Furstenberg (resp., atomic, strongly atomic). We let $\dim R$ denote the Krull dimension of $R$. 
\smallskip

Let $M$ be a monoid. We let $R[X;M]$ denote the monoid algebra of $M$ over $R$; that is, $R[X;M]$ is the commutative ring consisting of all polynomial expressions with exponents in $M$ and coefficients in~$R$. Following Gilmer's notation~\cite{rG84}, we write $R[M]$ instead of $R[X;M]$. Let $f(X)$ be a nonzero polynomial expression in $R[M]$. Then we can write $f(X)$ as follows:
\begin{equation} \label{eq:generic element in a monoid algebra}
	f(X) = r_1 X^{q_1} + \dots + r_k X^{q_k}
\end{equation}
for some nonzero coefficients $r_1, \dots, r_k \in R$ and some pairwise distinct exponents $q_1, \dots, q_k \in M$. The set $\text{supp} f(X) := \{q_1, \dots, q_k\}$ is uniquely determined by $f(X)$ and called the \emph{support} of $f(X)$. When~$M$ has a total order $\preceq$, we can assume that $q_1 \prec \dots \prec q_k$: in this case, the representation on the right-hand side of~\eqref{eq:generic element in a monoid algebra} is unique (as it is the case for standard polynomials), and the elements $\text{ord} \, f := q_1$ and $\deg f := q_k$ of the monoid $M$ are called the \emph{order} and the \emph{degree} of $f(X)$, respectively.
\smallskip

It is well known that the monoid algebra $R[M]$ is an integral domain if and only if $R$ is an integral domain and $M$ is a torsion-free monoid \cite[Theorem~8.1]{rG84}. When $R[M]$ is an integral domain, it follows from~\cite[Proposition 8.3]{GP74} that $\dim R[M] \ge 1 + \dim R$. In addition, when $R$ is Noetherian, it follows from \cite[Corollary~2]{jO88} that $\dim R[M] = \dim R + \text{rank} \, M$. As a result, the one-dimensional monoid algebras that are integral domains are precisely the monoid algebras of rank-one torsion-free monoids over fields. Background information on monoid algebras $R[M]$, emphasizing on the ascent of algebraic properties from the pair $(M, R)$ to $R[M]$ and including the most significant progress on the same subject until 1984, can be found in Gilmer's book~\cite{rG84}.

\bigskip
%%%%%%%%%%%%%%%%%%
%%%%%%%%%%%%%%%%%%
\section{The Lifting Construction}
\label{sec:lifting construction}

Additive submonoids of $\qq_{\ge 0}$ that are sparing play a central role in this section. We say that a subset~$S$ of $\qq_{> 0}$ is \emph{sparing} if there exist infinitely many $p \in \pp$ such that $v_p(q) \ge 0$ for all $q \in S^\bullet$, and we say that $p \in \pp$ is \emph{spared by} $S$ provided that $v_p(q) \ge 0$ for all $q \in S^\bullet$. Observe that every finite subset of $\qq_{> 0}$ is sparing and that a subset of $\qq_{> 0}$ is sparing if and only if the submonoid of $\qq$ it generates is sparing. In this section we introduce a construction that allows us to obtain from a sparing monoid $M$ another submonoid of $\qq_{\ge 0}$ that not only preserves some properties of $M$ but also earns new desirable properties: we call the monoid obtained from this construction a lifting monoid of $M$.

\medskip
%%%%%%%%%%%%%%%%%%%%%%%%%%%%%%%%%%%%%%%%%%%%%
\subsection{Descriptions of the Lifting Construction and the Lifting Decompositions}

We proceed to describe the lifting construction. For this purpose, we let $\mathcal{N}$ denote the class of all additive submonoids of $\nn_0$ (often called numerical monoids). Now let $M$ be a nontrivial sparing submonoid of $\qq_{\ge 0}$. Given a nonempty subset $S$ of $M^\bullet$ and an injective function $\pi \colon S \to \pp$, a \emph{lifting function} on $M$ is a function $\varphi \colon S \to \pp \times \mathcal{N}$ defined by the assignments $s \mapsto (\pi(s), N_s)$ that satisfies the following conditions:
\begin{enumerate}
	\item $v_{\pi(s)}(q) \ge 0$ and $v_{\pi(s)}(s) = 0$ for all $s \in S$ and $q \in M^\bullet$, and
	\smallskip
	
	\item $\pi(s) \in N_s$ for all $s \in S$.
\end{enumerate}
Since $M$ is sparing, such a function $\pi$ always exists for any prescribed nonempty subset $S$ of $M^\bullet$. Observe that the assignments $s \mapsto N_s$ (for any $s \in S$) are almost completely free except for the condition that $\pi(s) \in N_s$. Now, for each $q \in M$, set $M_q := \frac{q}{\pi(q)} N_q$ if $q \in S$ and $M_q := q \nn_0$ if $q \notin S$. For each $q \in M$, we see that~$M_q$ is a finitely generated Puiseux monoid containing~$q$. Finally, set
\[
	M_\varphi := \Big\langle \bigcup_{q \in M} M_q \Big\rangle,
\]
and call $M_\varphi$ the \emph{lifting} (\emph{monoid}) of $M$ with respect to~$\varphi$. Observe that $M \subseteq M_\varphi$ because $q \in M_q$ for every $q \in M^\bullet$. In what follows, we tacitly assume that $N_s \neq \pi(s) \nn_0$ for all $s \in S$. This assumption does not introduce any loss of generality as the restriction $\psi$ of~$\varphi$ to the set $\{s \in S : N_s \neq \nn_0 \pi(s) \}$ yields the same lifting; that is, $M_\psi = M_\varphi$. Note that there always exists a function $\pi$ such that the lifting monoid~$M_\varphi$ is also sparing for any choice of the lifting function $\varphi$.

Especial instances of the lifting construction have been used in the past to provide examples in commutative ring theory. For instance, the lifting monoid in the following example is the fundamental component in Grams' construction of the first atomic integral domain that does not satisfy the ACCP \cite[Theorem 1.3]{aG74} (in addition, see \cite[Theorem~3.3]{GL23} for a recent generalization of Grams' construction).

\begin{example} \label{ex:Grams monoid}
	Consider the additive monoid $M := \big\langle \frac 1{2^n} : n \in \nn_0 \big\rangle$, and set $S := \big\{ \frac 1{2^n} : n \in \nn_0 \big\} \subset M$. Let $(p_n)_{n \ge 0}$ be the strictly increasing sequence whose underlying set is $\pp \setminus \{2\}$, and then define the lifting function $\varphi \colon S \to \pp \times \mathcal{N}$ by setting $\varphi\big(\frac1{2^n}\big) = (p_n, \nn_0)$. In this case, $M_{\frac1{2^n}} = \frac{1}{2^n p_n} \nn_0$, and so the lifting monoid of $M$ with respect to $\varphi$ is
	\[
		M_\varphi = \Big\langle \frac 1{2^n p_n} : n \in \nn_0 \Big\rangle,
	\]
	which is called \emph{Grams' monoid}. It is not difficult to verify that $\mathcal{A}(M_\varphi) = \big\{ \frac1{2^n p_n} : n \in \nn_0 \big\}$ and so that the monoid $M_\varphi$ is atomic (see \cite[Example~3.2]{GL23}). However, $M_\varphi$ does not satisfy the ACCP because the ascending chain of principal ideals $\big( \frac1{2^n} + M_\varphi \big)_{n \ge 0}$ does not stabilize.
\end{example}

The fundamental advantage of the lifting construction is that in the obtained lifting monoid every element has a canonical sum decomposition.

\begin{prop} \label{prop:unique decomposition}
	Let $M$ be a sparing monoid, and let $\varphi \colon S \to \pp \times \mathcal{N}$ be a lifting function on $M$. Then any element $x \in M_\varphi$ can be uniquely decomposed as follows:
	\begin{equation} \label{eq:canonical decomposition}
		x = x_0 + \sum_{s \in S} x_s
	\end{equation}
with $x_0 \in M$ and $x_s \in M_s$ such that $s \nmid_{M_s} x_s$ for any $s \in S$.
\end{prop}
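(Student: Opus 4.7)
The plan is to treat existence and uniqueness separately, with both steps reducing to the following elementary fact about numerical monoids: for each $s \in S$, every $n \in N_s$ admits a unique expression $n = k\pi(s) + t$ with $k \in \nn_0$ and $t$ lying in the ``reduced set'' $T_s := \{m \in N_s : m - \pi(s) \notin N_s\}$. Existence here is just the greedy algorithm — iteratively subtract $\pi(s)$ while the outcome remains in $N_s$; this terminates since $N_s \subseteq \nn_0$. Uniqueness is a short argument: if $k\pi(s) + t = k'\pi(s) + t'$ with $t, t' \in T_s$ and $k > k'$, then $t' = (k-k')\pi(s) + t$ forces $t' - \pi(s) \in N_s$, contradicting $t' \in T_s$. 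An immediate consequence I would record is that $T_s$ meets every residue class modulo $\pi(s)$ in at most one element.

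For existence of the decomposition in \eqref{eq:canonical decomposition}, I would write $x \in M_\varphi$ as a finite sum of generators $x = \sum_i y_i$ with each $y_i \in M_{q_i}$ for some $q_i \in M$, and regroup by~$q$. The contribution from any $q \notin S$ is already in $M_q = q\nn_0 \subseteq M$, so it can be absorbed into $x_0$. For each $s \in S$, the aggregate $M_s$-contribution has the form $\frac{s}{\pi(s)}(k_s \pi(s) + t_s)$ by the numerical-monoid decomposition; this splits as $k_s s \in M$ (absorbed into $x_0$) plus $x_s := \frac{s}{\pi(s)} t_s \in M_s$. Because $t_s - \pi(s) \notin N_s$, we get $x_s - s \notin M_s$, i.e., $s \nmid_{M_s} x_s$.

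For uniqueness, suppose $x_0 + \sum_{s \in S} x_s = y_0 + \sum_{s \in S} y_s$ are two valid decompositions (each with only finitely many nonzero summands), fix $s^* \in S$, and write $x_{s^*} = \frac{s^*}{\pi(s^*)} t$ and $y_{s^*} = \frac{s^*}{\pi(s^*)} t'$ with $t, t' \in T_{s^*}$. Rearranging yields
\[
	t' - t = \frac{\pi(s^*)}{s^*}\Big(x_0 - y_0 + \sum_{s \in S \setminus \{s^*\}}(x_s - y_s)\Big).
\]
The conditions imposed on the lifting function now take over: $v_{\pi(s^*)}(q) \ge 0$ for every $q \in M^\bullet$, $v_{\pi(s^*)}(s^*) = 0$, and the injectivity of $\pi$ forces $v_{\pi(s^*)}(\pi(s)) = 0$ whenever $s \neq s^*$. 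Together these imply that every summand inside the parentheses has $v_{\pi(s^*)} \ge 0$, while $v_{\pi(s^*)}(\pi(s^*)/s^*) = 1$, so the integer $t' - t$ is divisible by $\pi(s^*)$. By the at-most-one-per-residue-class property of $T_{s^*}$, this forces $t = t'$ and hence $x_{s^*} = y_{s^*}$. Running this for each $s^* \in S$ gives $x_s = y_s$ for all $s \in S$, whence $x_0 = y_0$.

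The main obstacle is the valuation bookkeeping in the uniqueness step: the injectivity of $\pi$ must be invoked precisely to isolate the $s^*$ component at the prime $\pi(s^*)$, and both conditions of the lifting function are essential in order to make the valuation inequalities hold uniformly across \emph{all} of the other summands simultaneously. The existence step, by contrast, is essentially bookkeeping once the unique $N_s$-decomposition is available.
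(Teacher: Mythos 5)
Your proof is correct and takes essentially the same route as the paper's: write $x$ as a finite sum of generators, reduce each $M_s$-contribution by greedily subtracting copies of $s$, and prove uniqueness by applying the $\pi(t)$-adic valuation to the difference and invoking the injectivity of $\pi$ together with both lifting-function conditions. Extracting the numerical-monoid lemma about the reduced set $T_s$ and its at-most-one-per-residue-class property is a tidy repackaging of the paper's in-line computation, but it is not a genuinely different argument.
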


\begin{proof}
	Fix $x \in M_\varphi$. Since $M_\varphi$ is generated by the set $\bigcup_{q \in M} M_q$, one can write $x = \sum_{q \in M} y_q$, where $y_q \in M_q$ for every $q \in M$ and $y_q = 0$ for all but finitely many $q \in M$. Now fix $s \in S$, let $m_s$ be the maximum nonnegative integer such that $m_s s \mid_{M_s} y_s$, and set $ x_s := y_s - m_s s$. Note that if $s \nmid_{M_s} y_s$, then $m_s = 0$: in particular, $m_s = 0$ when $y_s = 0$. Observe that $x_s \in M_s$, and it follows from the maximality of $m_s$ that $s \nmid_{M_s} x_s$. After doing the same for each $s \in S$, we can write
	\[
		x = \sum_{q \in M \setminus S} y_q + \sum_{s \in S} y_s = \bigg( \sum_{q \in M \setminus S} y_q + \sum_{s \in S} m_s s \bigg) + \sum_{s \in S} x_s.
	\]
	Now set $x_0 :=  \sum_{q \in M \setminus S} y_q + \sum_{s \in S} m_s s$. Because $x_0 \in M$, the identity $x = x_0 + \sum_{s \in S} x_s$ is the desired sum decomposition specified in~\eqref{eq:canonical decomposition}.
	
	For the uniqueness, let $x = x'_0 + \sum_{s \in S} x'_s$ be a sum decomposition as in~\eqref{eq:canonical decomposition} (satisfying the same properties), and let us show that both sum decompositions of $x$ are the same. Assume that there exists $t \in S$ such that either $x_t \neq 0$ or $x'_t \neq 0$ (otherwise, both sum decompositions are the same). Write $x_t = \frac{t}{\pi(t)} n_t$ and $x'_t = \frac{t}{\pi(t)} n'_t$ for some $n_t, n'_t \in N_t$. Assume, without loss of generality, that $x'_t \ge x_t$ and, therefore, that $n'_t \ge n_t$. We can apply the $\pi(t)$-adic valuation map on both sides of the equality
	\[
		x'_t - x_t = (x_0 - x'_0) + \sum_{s \in S \setminus \{t\}} (x_s - x'_s)
	\]
	to obtain that $v_{\pi(t)}(x'_t - x_t) \ge 0$ (we are using here that $\pi$ is injective). Since $v_{\pi(t)}(t) = 0$, the fact that $v_{\pi(t)}\big( \frac{t}{\pi(t)} (n'_t - n_t)\big) = v_{\pi(t)}(x'_t - x_t) \ge 0$ implies that $\pi(t) \mid n'_t - n_t$. As a consequence, the equality $x'_t = \frac{n'_t - n_t}{\pi(t)} t + x_t$, along with the fact that $t \nmid_{M_t} x'_t$, ensures that $n'_t - n_t = 0$, which implies that $x'_t = x_t$. Thus, $x'_s = x_s$ for all $s \in S$ and, therefore, $x'_0 = x_0$. Hence the uniqueness follows.
\end{proof}

Assume the notation in the statement of Proposition~\ref{prop:unique decomposition}. We call $x_0$ the $M$-\emph{projection} of $x$ while, for each $s \in S$, we call $x_s$ the $M_s$-\emph{projection} of $x$. In addition, for each $s \in S$, we call any $p_s \in M_s$ such that $s \nmid_{M_s} p_s$ an $M_s$-\emph{projection}. Finally, we call (the right-hand side of) the equality~\eqref{eq:canonical decomposition} the $\varphi$-\emph{lifting decomposition} of $x$.

\medskip
%%%%%%%%%%%%%%%%%%%%%%%%%%%
\subsection{Properties of the Lifting Construction}

We proceed to establish some helpful properties of the lifting monoid and the lifting decomposition.

\begin{prop} \label{prop:relation of M-projections}
	Let $M$ be a sparing monoid, and let $\varphi \colon S \to \pp \times \mathcal{N}$ be a lifting function on $M$. Then the following statements hold.
	\begin{enumerate}
		\item For each $s \in S$, an element $p_s \in M_\varphi$ is an $M_s$-projection if and only if both conditions $p_s \in M_s$ and $\pi(s) \nmid_{N_s} \frac{\pi(s)}{s} p_s$ hold.
		\smallskip
		
		\item For each $M_s$-projection $p_s$, there exists a unique $M_s$-projection $q_s$ such that $p_s + q_s \in \nn_0 s$.
	\end{enumerate}
\end{prop}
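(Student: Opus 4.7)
For part~(1), my plan is to unpack the definition of $M_s$ using the monoid isomorphism $N_s \to M_s$ sending $n \mapsto \frac{s}{\pi(s)} n$, which carries $\pi(s)$ to $s$. Writing $p_s \in M_s$ as $p_s = \frac{s}{\pi(s)} n_s$ with $n_s \in N_s$, the relation $s \mid_{M_s} p_s$ translates directly to $\pi(s) \mid_{N_s} n_s$, and negating gives the claimed equivalence. The condition $p_s \in M_s$ is inherent in being an $M_s$-projection, so it must appear in the characterization.

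For part~(2), I would work through the same isomorphism. Writing $p_s = \frac{s}{\pi(s)} n_s$ with $n_s \in N_s$ and $\pi(s) \nmid_{N_s} n_s$, a candidate $q_s = ms - p_s$ (with $m \in \nn_0$ chosen so that $q_s \ge 0$) belongs to $M_s$ iff $m\pi(s) - n_s \in N_s$, and by part~(1), such a $q_s$ is an $M_s$-projection iff additionally $(m-1)\pi(s) - n_s \notin N_s$. So existence reduces to choosing the smallest $m \in \nn_0$ with $m\pi(s) - n_s \in N_s$, and uniqueness reduces to showing this minimum is well determined.

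The one real obstacle is to exhibit at least one valid $m$. The case $n_s = 0$ is trivial (take $m = 0$ and $q_s = 0$). For $n_s > 0$, I would set $d := \gcd(n_s, \pi(s))$ and $k := \pi(s)/d - 1$; the hypothesis $\pi(s) \nmid_{N_s} n_s$ forces $d < \pi(s)$ (otherwise $n_s$ would be a positive integer multiple of $\pi(s)$, and using $\pi(s) \in N_s$ one would have $\pi(s) \mid_{N_s} n_s$), so $k \ge 1$. Then $(k+1) n_s = (n_s/d)\,\pi(s)$ is a positive multiple of $\pi(s)$, and so for $m_0 := n_s/d$ the value $m_0 \pi(s) - n_s = k n_s$ lies in $N_s$ as a positive sum of copies of $n_s$. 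Hence a minimum valid $m$ exists, and the corresponding $q_s$ is the desired $M_s$-projection by the criterion above.

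For uniqueness, I would argue by direct cancellation: if $q_s$ and $q'_s$ are both $M_s$-projections with $p_s + q_s = ms$ and $p_s + q'_s = m's$, and if (WLOG) $m \le m'$, then $q'_s = q_s + (m'-m) s$; the strict inequality $m' > m$ would display $s$ as a divisor of $q'_s$ in $M_s$, contradicting that $q'_s$ is an $M_s$-projection, so $m = m'$ and $q_s = q'_s$.
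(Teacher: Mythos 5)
Your proof follows the paper's approach in both parts. Part~(1) is the same definition-unwinding through the isomorphism $n \mapsto \frac{s}{\pi(s)}n$ that the paper uses. In part~(2) you construct $q_s$ by minimizing a reparametrization of the same quantity the paper minimizes (you take the least $m \in \nn_0$ with $m\pi(s)-n_s \in N_s$, the paper the least $m \in N_s$ with $\pi(s) \mid n+m$; these produce the same $q_s$), and your uniqueness argument is the paper's verbatim. The one place you go beyond the paper is the explicit verification that a valid $m$ exists, via the gcd computation; the paper asserts the existence of its minimum without comment, implicitly relying on the tacit standing assumption $N_s \neq \pi(s)\nn_0$, which together with $\pi(s) \in N_s$ forces the nonzero elements of $N_s$ to have gcd $1$, so that $N_s$ has finite complement in $\nn_0$. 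Your more elementary argument makes this detail explicit, which is a mild but genuine improvement in exposition.
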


\begin{proof}
	(1) Fix $s \in S$, and take $p_s \in M_\varphi$. By definition, $p_s$ is an $M_s$-projection if and only if $p_s \in M_s$ and $s \nmid_{M_s} p_s$, which happens if and only if $p_s \in M_s \setminus \{ s + \frac{s}{\pi(s)} n : n \in N_s \}$. We are done because the last statement is equivalent to the fact that $p_s \in M_s$ and $\frac{\pi(s)}s p_s \notin \pi(s) + N_s$.
	\smallskip
	
	(2) Fix $s \in S$, and let $p_s$ be an $M_s$-projection. For the existence, write $p_s = \frac{s}{\pi(s)} n$ for some $n \in N_s$, and then take the minimum $m \in N_s$ such that $\pi(s) \mid n+m$. Now set $q_s := \frac{s}{\pi(s)} m \in M_s$. The minimality of $m$ ensures that $\pi(s) \nmid_{N_s} m$, and so it follows from part~(1) that $q_s$ is an $M_s$-projection. Finally, it is clear that $p_s + q_s \in \nn_0s$. For the uniqueness, suppose that $q_s$ and $q'_s$ are two $M_s$-projections such that $p_s + q_s$ and $p_s + q'_s$ both belong to $\nn_0 s$, and assume that $q_s \le q'_s$. Then we see that $q'_s - q_s \in \nn_0 s$, and so the fact that $s \nmid_{M_s} q'_s$ implies that $q'_s = q_s$.
\end{proof}

We now verify that any divisibility relation in the lifting monoid enforces a divisibility relation between the corresponding $M$-projections as well as certain divisibility relations between the corresponding $M_s$-projections.

\begin{prop} \label{prop:divisibility relation of M-projections}
	Let $M$ be a sparing monoid, and let $\varphi \colon S \to \pp \times \mathcal{N}$ be a lifting function on $M$. For $b,c \in M$, let $b = b_0 + \sum_{s \in S} b_s$ and $c = c_0 + \sum_{s \in S} c_s$ be the $\varphi$-lifting decompositions of $b$ and $c$, respectively. If $b \mid_{M\varphi} c$, then the following statements hold.
	\begin{enumerate}
		\item $b_0 \mid_M c_0$.
		\smallskip
		
		\item If $b_s > c_s$ for some $s \in S$, then $b_0 + s \mid_M c_0$.
	\end{enumerate}
\end{prop}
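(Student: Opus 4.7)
The plan is to prove both parts simultaneously by unpacking the divisibility relation $b \mid_{M_\varphi} c$ and then invoking the uniqueness clause of Proposition~\ref{prop:unique decomposition}. Write $c = b + d$ for some $d \in M_\varphi$, and let $d = d_0 + \sum_{s \in S} d_s$ be the $\varphi$-lifting decomposition of $d$. Adding term by term gives
\[
	c = (b_0 + d_0) + \sum_{s \in S} (b_s + d_s).
\]
This is a valid sum of an element of $M$ plus elements of the various $M_s$, but it is \emph{not} in general the $\varphi$-lifting decomposition of $c$, because the summands $b_s + d_s \in M_s$ may still be divisible by $s$ in $M_s$.

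To remedy this, for each $s \in S$ let $m_s$ be the largest nonnegative integer such that $m_s s \mid_{M_s} (b_s + d_s)$, and set $e_s := b_s + d_s - m_s s \in M_s$. By the maximality of $m_s$, we have $s \nmid_{M_s} e_s$, so each $e_s$ is an $M_s$-projection. Since $m_s = 0$ whenever $b_s + d_s = 0$ and all but finitely many of the $b_s$ and $d_s$ vanish, the sum $\sum_{s \in S} m_s s$ belongs to $M$. Consequently,
\[
	c = \Bigl( b_0 + d_0 + \sum_{s \in S} m_s s \Bigr) + \sum_{s \in S} e_s
\]
exhibits a $\varphi$-lifting decomposition of $c$. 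By the uniqueness part of Proposition~\ref{prop:unique decomposition}, I would then conclude that $c_0 = b_0 + d_0 + \sum_{s \in S} m_s s$ and that $c_s = e_s$ for every $s \in S$.

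From here both claims fall out. For part~(1), observe that $c_0 - b_0 = d_0 + \sum_{s \in S} m_s s$ lies in $M$ (each $m_s s \in M$ because $s \in M$ and $M$ is a monoid), so $b_0 \mid_M c_0$. For part~(2), suppose $b_s > c_s$ for some $s \in S$. Since $c_s = e_s = b_s + d_s - m_s s$ and $d_s \ge 0$, the inequality $c_s < b_s$ forces $m_s s > d_s \ge 0$, hence $m_s \ge 1$. Therefore
\[
	c_0 - (b_0 + s) = d_0 + (m_s - 1) s + \sum_{t \in S \setminus \{s\}} m_t t \in M,
\]
which yields $b_0 + s \mid_M c_0$, as desired.

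There is no genuine obstacle here: the entire argument hinges on correctly applying the uniqueness of the $\varphi$-lifting decomposition established in Proposition~\ref{prop:unique decomposition}, and the only step requiring a moment of care is the bookkeeping that shows $\sum_{s \in S} m_s s$ makes sense (finitely many nonzero terms) and that each $m_s s$ actually lives in $M$ rather than merely in $M_s$, so that the right-hand side of the equation for $c_0$ stays inside $M$.
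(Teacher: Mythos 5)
Your proof is correct and follows essentially the same path as the paper's: both decompose $c = b + d$, combine the lifting decompositions termwise, extract the multiples $m_s s$ to obtain the genuine $\varphi$-lifting decomposition of $c$, and read off both conclusions from the uniqueness guaranteed by Proposition~\ref{prop:unique decomposition}. The only cosmetic difference is that you carry out both parts in one pass while the paper treats them sequentially.
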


\begin{proof}
	Take $b' \in M_\varphi$ such that $c = b + b'$. Now let $b' = b'_0 + \sum_{s \in S} b'_s$ be the $\varphi$-lifting decomposition of~$b'$.
	\smallskip
	
	(1) We can obtain a $\varphi$-lifting decomposition of $c$ from the identity $c = b_0 + b'_0 + \sum_{s \in S} (b_s + b'_s)$ by replacing each summand $b_s + b'_s$ by its $M_s$-projection and, to compensate the equality, adding $m := \sum_{s \in S} m_s s$ to $b_0 + b'_0$, where $m_s$ is the maximum nonnegative integer such that $m_s s \mid_{M_s} b_s + b'_s$ (observe that $m_s = 0$ for all but finitely many $s \in S$). It is clear that $m \in M$. Hence the uniqueness of the $\varphi$-lifting decomposition guarantees that $c_0 = b_0 + b'_0 + m$. As $m \in M$, it follows that $b_0 \mid_M c_0$.
	\smallskip
	
	(2) For each $s \in S$, let $m_s$ be defined as in the previous part, and also set $m := \sum_{s \in S} m_s s \in M$. As we have seen before, $c_0 = b_0 + b'_0 + m$. If for some $s \in S$ the inequality $b_s > c_s$ holds, then $b_s + b'_s > c_s$ and, therefore, the uniqueness of the $\varphi$-lifting decomposition of $c$ guarantees that $m_s \ge 1$: this in turn implies that $s \mid_M m$, whence from $c_0 = b_0 + b'_0 + m$ we infer that $b_0 + s \mid_M c_0$.
\end{proof}

We now consider atomicity under the lifting construction: in particular, we will see that the property of being atomic ascends from any sparing monoid to its lifting monoids.

\begin{prop} \label{prop:atoms in lifting constructions}
	Let $M$ be a sparing monoid, and let $\varphi \colon S \to \pp \times \mathcal{N}$ be a lifting function on $M$. Then the following statements hold.
	\begin{enumerate}
		\item $\mathcal{A}(M) \setminus S \subseteq \mathcal{A}(M_\varphi)$.
		\smallskip

		\item $\mathcal{A}(M_q) \setminus \{q\} \subseteq \mathcal{A}(M_\varphi)$ for every $q \in M$.
		\smallskip
		
		\item $\mathcal{A}(M_\varphi) = \big( \mathcal{A}(M) \setminus S \big) \sqcup \{a \in \mathcal{A}(M) \cap S : a \in \mathcal{A}(M_a)\} \sqcup \big( \bigcup_{s \in S} \mathcal{A}(M_s) \setminus \{s\} \big)$.
		\smallskip
		
		\item If $M$ is atomic, then $M_\varphi$ is atomic.
	\end{enumerate}
\end{prop}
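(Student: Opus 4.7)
The plan is to leverage the uniqueness of the $\varphi$-lifting decomposition (Proposition~\ref{prop:unique decomposition}) systematically across all four parts. The core technique: given a purported factorization $x = b + c$ inside $M_\varphi$, write out the $\varphi$-lifting decompositions of $b$ and $c$, sum them componentwise, reduce each resulting $M_s$-component to an $M_s$-projection by peeling off multiples of $s$ (which get absorbed into the $M$-projection), and then match the result with the lifting decomposition of $x$ via uniqueness. This translates atomicity questions inside $M_\varphi$ into atomicity questions inside $M$ or inside some $M_s$, where the analysis is straightforward.

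For part~(1), if $a \in \mathcal{A}(M) \setminus S$, the lifting decomposition of $a$ is $a_0 = a$ with every $a_s = 0$. The core technique reduces any factorization $a = b + c$ in $M_\varphi$ to an identity $a = b_0 + c_0 + \sum_{s\in S} m_s s$ inside $M$. Since no term $m_s s$ can equal $a$ (it would force either $a = s \in S$ or a nontrivial $M$-factorization of $a$), atom-ness of $a$ in $M$ forces one of $b_0, c_0$ to equal $a$; nonnegativity then collapses the other factor to zero. Part~(2) is analogous: for $a \in \mathcal{A}(M_q) \setminus \{q\}$, first verify $q \nmid_{M_q} a$ using atom-ness of $a$ in $M_q$, so that the lifting decomposition has $a_q = a$ and the rest zero; the core technique then reduces factorizations to $M_q$, where atom-ness concludes the argument.

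For part~(3), the inclusion $\supseteq$ follows from~(1), (2), and a third application of the core technique to the middle set: when $a \in \mathcal{A}(M) \cap S$ with $a \in \mathcal{A}(M_a)$, the only obstruction not already handled is a term $m_a \cdot a = a$, which would force $b_a + c_a = a$ with both summands being $M_a$-projections; but by Proposition~\ref{prop:relation of M-projections}(1) and atom-ness of $a$ in $M_a$, such a splitting is impossible. For $\subseteq$, an atom $a$ of $M_\varphi$ has exactly one nonzero summand in its lifting decomposition; a case split on which summand is nonzero, together with transferring the atom property from $M_\varphi$ to $M$ or to the appropriate $M_s$, places $a$ in the correct component. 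Disjointness of the three sets hinges on the observation that a nonzero element of $M \cap M_s$ is necessarily divisible by $s$ in $M_s$ (property~(1) of a lifting function forces $\pi(s) \mid n$ in the expression $\frac{s}{\pi(s)} n$), which forbids its membership in $\mathcal{A}(M_s) \setminus \{s\}$.

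For part~(4), given $x \in M_\varphi$ with lifting decomposition $x = x_0 + \sum_{s\in S} x_s$, apply atomicity of $M$ to factor $x_0$ into atoms $a_i \in \mathcal{A}(M)$, and apply atomicity of each finitely generated monoid $M_s$ (for $s$ with $x_s \ne 0$) to factor $x_s$ into atoms $b_{s,j} \in \mathcal{A}(M_s)$. Since $x_s$ is an $M_s$-projection, no $b_{s,j}$ can equal $s$, so each $b_{s,j} \in \mathcal{A}(M_s) \setminus \{s\} \subseteq \mathcal{A}(M_\varphi)$ by~(2). For each $a_i$, part~(1) handles the case $a_i \notin S$; when $a_i \in S$, one further factors $a_i$ inside the (finitely generated, hence atomic) monoid $M_{a_i}$, where either $a_i$ is itself an atom of $M_{a_i}$ (landing in $\mathcal{A}(M_\varphi)$ by the middle set of~(3)) or $a_i$ decomposes into atoms distinct from $a_i$, each covered by~(2). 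The main obstacle lies in part~(3), specifically ruling out the spurious ``$m_a = 1$'' case for the middle set and verifying disjointness of the three sets; both rest delicately on Proposition~\ref{prop:relation of M-projections} together with the valuation condition~(1) in the definition of a lifting function.
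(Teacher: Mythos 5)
Your proposal is correct and follows essentially the same strategy as the paper: exploit the uniqueness of the $\varphi$-lifting decomposition (Proposition~\ref{prop:unique decomposition}) to reduce atomicity questions in $M_\varphi$ to atomicity in $M$ and the finitely generated monoids $M_s$, with the identification of atoms in part~(3) resting on the same case analysis. One small streamlining worth noting: in part~(4) you invoke part~(3) to dispatch the subcase $a \in \mathcal{A}(M)\cap S$ with $a \in \mathcal{A}(M_a)$ directly, whereas the paper's proof re-runs a self-contained argument for this subcase (one which is in fact vacuous once part~(3) is available, since $A_S \subseteq \mathcal{A}(M_\varphi)$), so your version is slightly cleaner.
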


\begin{proof}
	(1) Take $a \in \mathcal{A}(M) \setminus S$. To argue that $a$ is also an atom of $M_\varphi$, write $a = x+y$ for some $x,y \in M_\varphi$. Let $x = x_0 + \sum_{s \in S} x_s$ and $y = y_0 + \sum_{s \in S} y_s$ be the $\varphi$-lifting decompositions of $x$ and~$y$, respectively. If $x_s \neq 0$ for some $s \in S$, then it would follow from part~(2) of Proposition~\ref{prop:divisibility relation of M-projections} that $x_0 + s \mid_M a$, and so the fact that $a \in \mathcal{A}(M)$ would imply that $a \in \{x_0, s\}$, which is not possible because $s \notin \{0,a\}$. Hence $x_s = 0$ for every $s \in S$. In a similar way, one can infer that $y_s = 0$ for every $s \in S$. Now the identity $a = x_0 + y_0$, along with the fact that $a$ is an atom of $M$, guarantees that either $x_0 = 0$ or $y_0 = 0$, which in turn implies that $x=0$ or $y=0$. Thus, $a \in \mathcal{A}(M_\varphi)$, and we can conclude that $\mathcal{A}(M) \setminus S \subseteq \mathcal{A}(M_\varphi)$.
	\smallskip
	
	(2) Fix $q \in M$. If $q \notin S$, then $M_q = \nn_0 q$ and, therefore, $\mathcal{A}(M_q) \setminus \{q\} = \emptyset \subseteq \mathcal{A}(M_\varphi)$. Thus, we assume that $q \in S$. Take $a \in \mathcal{A}(M_q) \setminus \{q\}$. Since $a \neq q$ and $q \neq 0$, it follows that $q \nmid_{M_q} a$. This means that $a$ is its own $M_q$-projection. To argue that $a$ is an atom of $M_\varphi$, write $a = b + c$ for some $b,c \in M_\varphi$, and let $b = b_0 + \sum_{s \in S} b_s$ and $c = c_0 + \sum_{s \in S} c_s$ be the $\varphi$-lifting decompositions of $b$ and $c$, respectively. Observe that the $\varphi$-lifting decomposition of $a$ can be obtained from the identity $a = b_0 + c_0 + \sum_{s \in S} (b_s + c_s)$ by replacing each $b_s + c_s$ by its $M_s$-projection and, to compensate the equality, adding $m := \sum_{s \in S} m_s s$ to $b_0 + c_0$, where $m_s$ is the maximum nonnegative integer such that $m_s s \mid_{M_s} b_s + c_s$ (so $m_s = 0$ for all but finitely many $s \in S$). Since $a$ is its own $M_q$-projection, the $M$-projection $b_0 + c_0 + m$ of $a$ equals~$0$, which implies that $b_0 = c_0 = m = 0$. This in turn implies that, for each $s \in S$, the equality $m_s = 0$ holds, and so $b_s + c_s$ is the $M_s$-projection of $a$. Thus, $b_s = c_s = 0$ for any $s \in S \setminus \{q\}$ and $a = b_q + c_q$. Since $a \in \mathcal{A}(M_q)$, either $b_q = 0$ or $c_q = 0$, which implies that either $b = 0$ or $c = 0$. Hence $a \in \mathcal{A}(M_\varphi)$, and so the inclusion $\mathcal{A}(M_q) \setminus \{q\} \subseteq \mathcal{A}(M_\varphi)$ holds.
	\smallskip
	
	(3) Set $A_S := \{a \in \mathcal{A}(M) \cap S : a \in \mathcal{A}(M_a)\}$ and $U := \bigcup_{s \in S} \mathcal{A}(M_s) \setminus \{s\}$. We want to verify that $\mathcal{A}(M_\varphi) = (\mathcal{A}(M) \setminus S) \sqcup A_S \sqcup U$. To do so, take $a \in \mathcal{A}(M_\varphi)$. Assume first that $a \in M$. In this case, $a \in \mathcal{A}(M)$. If $a \notin S$, then $a \in \mathcal{A}(M) \setminus S$. If $a \in S$, then the containment $a \in \mathcal{A}(M_a)$ ensures that $a \in A_S$. Now assume that $a \notin M$. Because $a \in \mathcal{A}(M_\varphi)$, it follows from Proposition~\ref{prop:unique decomposition} that $a \in M_s$ for some $s \in S$, and so $a \in \mathcal{A}(M_s)$. Since $a \notin M$, it follows that $a \in \mathcal{A}(M_s) \setminus \{s\}$, which implies that $a \in U$. Hence $\mathcal{A}(M_\varphi) \subseteq (\mathcal{A}(M) \setminus S) \sqcup A_S \sqcup U$  as the three sets involved in the union are clearly pairwise disjoint.
	
	 Now let us argue the reverse inclusion. It follows from part~(1) that $\mathcal{A}(M) \setminus S \subseteq \mathcal{A}(M_\varphi)$. To argue that $A_S \subseteq \mathcal{A}(M_\varphi)$, take $a \in A_S$ and write $a = q + r$ for some $q,r \in M_\varphi$. Assume first that $q \notin M$. Thus, for some $s \in S$, the $M_s$-projection in the $\varphi$-lifting decomposition of~$q$ is nonzero, and so it follows from part~(2) of Proposition~\ref{prop:divisibility relation of M-projections} that $s \mid_M a$. This, along with the fact that $a \in \mathcal{A}(M)$, ensures that $a = s$ and, therefore, $q = s$ and $r=0$. We can also conclude that $q=0$ if we assume that $r \notin M$. Lastly, if $q,r \in M$, then one of them must equal $0$ because $a \in \mathcal{A}(M)$. Hence $a \in \mathcal{A}(M_\varphi)$, and so the inclusion $A_S \subseteq \mathcal{A}(M_\varphi)$ holds. Now observe that the inclusion $U \subseteq \mathcal{A}(M_\varphi)$ also holds by part~(2). Hence $(\mathcal{A}(M) \setminus S) \sqcup A_S \sqcup U \subseteq \mathcal{A}(M_\varphi)$.
	\smallskip
	
	(4) Assume that $M$ is atomic. First, let us argue that every element of $\mathcal{A}(M)$ is atomic in $M_\varphi$. Fix $a \in \mathcal{A}(M)$. If $a \notin S$, then it follows from part~(1) that $a$ belongs to $\mathcal{A}(M_\varphi)$, and so $a$ is an atomic element of $M_\varphi$. Suppose, therefore, that $a \in S$. We split the rest of our argument into the following two cases.
	\smallskip
	
	\noindent \textsc{Case 1:} $a \notin \mathcal{A}(M_a)$. In this case, since $M_a$ is atomic (because it is finitely generated), $a$ can be written as a sum of elements in the set $\mathcal{A}(M_a)$, and the fact that $a \notin \mathcal{A}(M_a)$ ensures that the elements in such a sum are indeed in $\mathcal{A}(M_a) \setminus \{a\}$. Thus, the inclusion in part~(2) guarantees that $a$ is atomic in $M_\varphi$.
	\smallskip
	
	\noindent \textsc{Case 2:} $a \in \mathcal{A}(M_a)$. If $a \in \mathcal{A}(M_\varphi)$, then $a$ is an atomic element of $M_\varphi$. Therefore we will assume that~$a \notin \mathcal{A}(M_\varphi)$. Write $a = b+c$ for some $b,c \in M_\varphi^\bullet$. As $a \in M$, the $M_s$-projection of~$a$ equals zero for all $s \in S$. Hence, since $a \in \mathcal{A}(M)$, it follows from part~(2) of Proposition~\ref{prop:divisibility relation of M-projections} that for each $s \in S \setminus \{a\}$ the $M_s$-projection of $b$ equals zero. Thus, $b = b_0 + b_a$ is the $\varphi$-lifting decomposition of~$b$, where $b_0$ and~$b_a$ are the $M$-projection and the $M_a$-projection of $b$, respectively. Similarly, $c = c_0 + c_a$ is the $\varphi$-lifting decomposition of $c$, where $c_0$ and $c_a$ are the $M$-projection and the $M_a$-projection of $c$, respectively. As $a \in \mathcal{A}(M)$ and both $b$ and $c$ are positive, we see that $b_a + c_a > 0$. Since $a = (b_0 + c_0) + (b_a + c_a)$, it follows from the uniqueness of the $\varphi$-lifting decomposition that $b_a + c_a = na$ for some $n \in \nn$, which implies that $b_0 = c_0 = 0$ and $n=1$. Hence $a = b_a + c_a$, and from the fact that both $b$ and $c$ are positive we infer that $b_a$ and $c_a$ are both less than $a$. As a result, we can write both $b_a$ and $c_a$ as sums of elements in $\mathcal{A}(M_a) \setminus \{a\}$, and so it follows from part~(2) that $b_a$ and $c_a$ are atomic elements in $M_\varphi$. Hence $a$ is also an atomic element in $M_\varphi$.
	\smallskip
	
	Thus, we have argued that every element of $\mathcal{A}(M)$ is atomic in $M_\varphi$. Because $M$ is atomic, this implies that every element of $M$ is atomic in $M_\varphi$. In particular, every element of $S$ is atomic in $M_\varphi$. Now for each $s \in S$, every element of $M_s^\bullet$ can be written as the sum of copies of $s$ and copies of the elements in the set $\mathcal{A}(M_s) \setminus \{s\}$. Hence it follows from part~(2) that for any fixed $s \in S$, each element of $M_s$ is atomic in $M_\varphi$. As $M \cup \big(\bigcup_{s \in S} M_s \big)$ is a generating set of $M_\varphi$, we can conclude that every element of $M_\varphi$ is atomic; that is, $M_\varphi$ is an atomic monoid.
\end{proof}

The lifting construction is an alternative construction to that called atomization in \cite[Section~3]{GL23}, where the authors also embed a given rank-one torsion-free monoid into another by introducing prime denominators. As suggested by the term ``atomization", the monoid constructed in the atomization construction is always atomic (even if the initial monoid is not) \cite[Proposition~3.1]{GL23}. Unlike the monoid obtained by atomization, the monoid obtained as a lifting monoid of a sparing monoid may not be atomic. The following example illustrates this observation.

\begin{example}
	Let $M$ be the additive monoid consisting of all dyadic nonnegative rational numbers, namely, $\zz[\frac12]_{\ge 0}$. Clearly, $M$ is a sparing (Puiseux) monoid. Also observe that $M$ is antimatter. Let $(s_n)_{n \ge 1}$ be an enumeration of the elements of $M^\bullet$, and then let $(p_n)_{n \ge 1}$ be a strictly increasing sequence of odd primes such that $v_{p_n}(s_n) = 0$ and $p_{n+1} s_n > p_n$ for every $n \in \nn$. Now let $M_\varphi$ be the lifting monoid of $M$ with respect to the lifting map $\varphi \colon M^\bullet \to \pp \times \mathcal{N}$ given by $\varphi(s_n) := (p_n, \langle p_n, p_{n+1} \rangle)$ (here $\pi$ is the map given by the assignments $s_n \mapsto p_n$, which is injective because the sequence $(p_n)_{n \ge 1}$ is strictly increasing). Observe that, in this case, $M_{s_n} := \big\langle s_n, \frac{p_{n+1}}{p_n} s_n \big\rangle$ for every $n \in \nn$. Since $M$ is antimatter, no element of $M$ is an atom of $M_\varphi$. Therefore
	\[
		\mathcal{A}(M_\varphi) \subseteq \bigcup_{n \in \nn} \mathcal{A}(M_{s_n}) = \Big\{s_n, \frac{p_{n+1}}{p_n} s_n : n \in \nn \Big\}.
	\]
	From this, one can readily argue that $\mathcal{A}(M_\varphi) = \big\{ \frac{p_{n+1}}{p_n} s_n \mid n \in \nn \big\}$. Now the fact that $p_{n+1} s_n > p_n$ for every $n \in \nn$ guarantees that $\inf \mathcal{A}(M_\varphi) \ge 1$. As a consequence, no element of the set $M \cap (0,1)$ is atomic in $M_\varphi$. Thus, the lifting monoid $M_\varphi$ is not atomic.
\end{example}

As our next proposition indicates, both the lifting construction and the lifting decomposition have a desirable behavior with respect to satisfying the ACCP.

\begin{prop} \label{prop:ACCP in lifting constructions}
	Let $M$ be a sparing monoid, and let $\varphi \colon S \to \pp \times \mathcal{N}$ be a lifting function on $M$. Then the following statements hold.
	\begin{enumerate}
		\item An element of $M_\varphi$ satisfies the ACCP in $M_\varphi$ if and only if its $M$-projection satisfies the ACCP in $M$.
		\smallskip
		
		\item The lifting monoid $M_\varphi$ satisfies the ACCP if and only if $M$ satisfies the ACCP.
	\end{enumerate}
\end{prop}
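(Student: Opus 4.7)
The plan is to establish part (1) first, from which part (2) follows immediately by specializing to elements of $M$. For the forward implication of (1), I would assume that $x \in M_\varphi$ satisfies the ACCP in $M_\varphi$ and take any ascending chain $(y_n + M)_{n \ge 0}$ of principal ideals of $M$ starting at $y_0 = x_0$. The divisibilities $y_{n+1} \mid_M y_n$ persist in $M_\varphi$, and since $x_0 \mid_{M_\varphi} x$, prepending $x$ yields an ascending chain of principal ideals of $M_\varphi$ starting at $x$. ACCP at $x$ forces this chain to stabilize; because $M_\varphi$ is a submonoid of $\qq_{\ge 0}$ (and hence reduced), the equalities $y_n + M_\varphi = y_{n+1} + M_\varphi$ force $y_n = y_{n+1}$, so the original chain stabilizes in $M$ as well.

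For the backward implication of (1), the strategy will be to decouple an arbitrary ascending chain in $M_\varphi$ into chains in $M$ and in each $M_s$, and then stabilize each component. Given an ascending chain $(y_n + M_\varphi)_{n \ge 0}$ starting at $x$, part (1) of Proposition~\ref{prop:divisibility relation of M-projections} supplies the ascending chain $((y_n)_0 + M)_{n \ge 0}$ in $M$ starting at $x_0$; ACCP at $x_0$ in $M$ makes the $M$-projections eventually constant past some index $N$. For $n \ge N$, writing $y_n = y_{n+1} + v_n$ and re-running the bookkeeping from the proof of Proposition~\ref{prop:divisibility relation of M-projections}(1), the constancy of $(y_n)_0$ forces both $(v_n)_0 = 0$ and the ``carrying'' integers $m_{s,n}$ to vanish, so the lifting decomposition splits cleanly: $(y_n)_s = (y_{n+1})_s + (v_n)_s$ for every $s \in S$. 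Each sequence $((y_n)_s)_{n \ge N}$ is then weakly decreasing inside the discrete set $\frac{s}{\pi(s)} \nn_0 \supseteq M_s$, bounded below by zero, and nontrivial for only finitely many $s \in S$ (namely those with $(y_N)_s \neq 0$), so all the $M_s$-projections stabilize simultaneously past some common index $N' \ge N$. The uniqueness half of Proposition~\ref{prop:unique decomposition} then forces $y_n = y_{n+1}$ for all $n \ge N'$.

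Part (2) will follow cleanly from part (1). If $M_\varphi$ satisfies the ACCP, then every $b \in M$, viewed inside $M_\varphi$, satisfies the ACCP there; since its $M$-projection is $b$ itself, part (1) transfers the ACCP to $M$. Conversely, if $M$ satisfies the ACCP, then the $M$-projection of any $x \in M_\varphi$ satisfies the ACCP in $M$, and a second application of part (1) delivers the ACCP at $x$ in $M_\varphi$. The main obstacle will be the backward implication of (1): one needs to control the $M$-projection and all the $M_s$-components of the iterated divisors $y_n$ simultaneously, and the crucial ingredient that makes this tractable is that each $M_s$ sits inside the discrete set $\frac{s}{\pi(s)} \nn_0$, which forces weakly decreasing sequences of $M_s$-projections to terminate after finitely many steps.
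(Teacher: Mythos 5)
Your proof is correct and follows essentially the same strategy as the paper's: use Proposition~\ref{prop:divisibility relation of M-projections}(1) to transfer the chain to $M$-projections, stabilize there by hypothesis, then stabilize the finitely many nonzero $M_s$-components (the paper cites Proposition~\ref{prop:divisibility relation of M-projections}(2) for the weak decrease, while you re-derive it directly from the carrying argument, and you invoke discreteness of $\frac{s}{\pi(s)}\nn_0$ where the paper notes $M_s$ is a finitely generated Puiseux monoid; these are equivalent). Your forward direction uses the reducedness of $M_\varphi$ to collapse ideal equality to element equality, which is a clean minor simplification of the paper's appeal to Proposition~\ref{prop:divisibility relation of M-projections}(1).
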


\begin{proof}
	(1) Fix a nonzero $b \in M_\varphi$, and let $b_0$ be the $M$-projection of $b$. Let us show that $b$ satisfies the ACCP in $M_\varphi$ if and only if $b_0$ satisfies the ACCP in $M$.
	
	For the direct implication, suppose that $b$ satisfies the ACCP in $M_\varphi$. Observe that to each ascending chain of principal ideals $(b_n + M)_{n \ge 0}$ in~$M$ starting at $b_0$, we can assign the ascending chain of principal ideals $(b_n + M_\varphi)_{n \ge -1}$ in $M_\varphi$ starting at $b$, after setting $b_{-1} := b$ (the later sequence of ideals is ascending because $b_0 \mid_{M_\varphi} b$). Under this assignment, it follows from part~(1) of Proposition~\ref{prop:divisibility relation of M-projections} that $(b_n + M)_{n \ge 0}$ stabilizes if and only if $(b_n + M_\varphi)_{n \ge -1}$ stabilizes. As a result, the fact that $b$ satisfies the ACCP in $M_\varphi$ implies that $b_0$ satisfies the ACCP in $M$.
	
	Conversely, suppose that $b_0$ satisfies the ACCP in $M$. Now let $(b'_n + M_\varphi)_{n \ge 1}$ be an ascending chain of principal ideals in $M_\varphi$ starting at $b'_1 := b$. It follows from part~(1) of Proposition~\ref{prop:divisibility relation of M-projections} that the sequence of principal ideals of $M$ that we obtain after replacing, for each $k \in \nn$, the term $b'_k + M_\varphi$ of the sequence $(b'_n + M_\varphi)_{n \ge 1}$ by the principal ideal of $M$ generated by the $M$-projection of $b'_k$ is an ascending chain of principal ideals of $M$ starting at $b_0$, and so it must stabilize because $b_0$ satisfies the ACCP in $M$. Thus, from one point on all the terms of the sequence $(b'_n)_{n \ge 1}$ have the same $M$-projection, and so we can assume, without loss of generality, that all the terms of the sequence $(b'_n)_{n \ge 1}$ have the same $M$-projection. As a result, it follows from part~(2) of Proposition~\ref{prop:divisibility relation of M-projections} that, for each $s \in S$, the sequence $(p_s(b'_n))_{n \ge 1}$ obtained from replacing each term of the sequence $(b'_n)_{n \ge 1}$ by its $M_s$-projection $p_s(b'_n)$ must be a decreasing sequence and, because all the terms of $(p_s(b'_n))_{n \ge 1}$ belong to the finitely generated Puiseux monoid~$M_s$, the same sequence must stabilize. This, along with the fact that all but finitely many $M_s$-projections of $b'_1$ are zero, guarantees that the ascending chain of principal ideals $(b'_n + M_\varphi)_{n \ge 1}$ must also stabilize. Hence $b$ satisfies the ACCP in $M_\varphi$.
	\smallskip
	
	(2) This is an immediate consequence of part~(1).
\end{proof}

\bigskip
%%%%%%%%%%%%%%%%%%%%%%%%%%
%%%%%%%%%%%%%%%%%%%%%%%%%%
\section{Ascent of Atomicity to Monoid Algebras}
\label{sec:ascent of atomicity}

As mentioned in the introduction, it is still an open problem whether in the class of finite-rank torsion-free monoids, the property of being atomic ascends to monoid algebras over fields of characteristic zero. This problem will be solved as a consequence of our main theorem, which we will establish in this section. First, we need to argue that the lifting construction preserves being a $k$-MCD (for each $k \in \nn$).

\medskip
%%%%%%%%%%%%%%%%%%%%%%%%%%%%%%%%%%%%%%
\subsection{The Lifting Construction and Maximal Common Divisors}

In this subsection we first show that, for each $k \in \nn$, the property of being a $k$-MCD monoid transfers between a rank-one torsion-free monoid and its lifting monoids. Then we use this result (the case $k=2$) to construct an example of an atomic Puiseux monoid that is not strongly atomic. A more refined  version of this preliminary construction plays a crucial role in the proof of our main theorem.

\begin{lemma} \label{lem:k-MCD transfer under lift construction}
	Let $M$ be a sparing monoid, and let $M_\varphi$ be the lifting monoid of $M$ with respect to a lifting map~$\varphi$. Then for each $k \in \nn$, the monoid $M$ is a $k$-MCD monoid if and only if $M_\varphi$ is a $k$-MCD monoid.
\end{lemma}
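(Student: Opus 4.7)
My plan is to establish the two implications separately.

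For the backward direction, I would assume $M_\varphi$ is $k$-MCD and take $b_1, \ldots, b_k \in M$. Viewing them in $M_\varphi$ via inclusion, let $d$ be an MCD in $M_\varphi$ with $\varphi$-lifting decomposition $d = d_0 + \sum_{s \in S} d_s$. For each $s$, pick the unique complement $q_s \in M_s$ satisfying $d_s + q_s \in \nn_0 s$ via Proposition~\ref{prop:relation of M-projections}(2) (so $q_s = 0$ when $d_s = 0$), and form $d^{+} := d + \sum_s q_s \in M$. The key observation is that $d^{+}$ is still a common divisor of $\{b_i\}$ in $M_\varphi$: since each $b_i \in M$ has zero $M_s$-projections, the uniqueness in Proposition~\ref{prop:unique decomposition} forces the $M_s$-projection of $b_i - d$ to equal $q_s$ for every $s$, so $b_i - d^{+} = (b_i - d) - \sum_s q_s$ has zero $M_s$-projections and lies in $M$. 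Combined with $d \mid_{M_\varphi} d^{+}$, maximality of $d$ gives $d^{+} = d$, so every $q_s$ vanishes and hence every $d_s$ vanishes (the complement of a nonzero $M_s$-projection is strictly positive). Thus $d = d_0 \in M$, and it is an MCD in $M$ by Proposition~\ref{prop:divisibility relation of M-projections}(1) together with the fact that the inclusion $M \hookrightarrow M_\varphi$ reflects divisibility on $M$.

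For the forward direction, I would assume $M$ is $k$-MCD and take $b_1, \ldots, b_k \in M_\varphi$. Let $T := \{s \in S : (b_i)_s \neq 0 \text{ for some } i\}$, which is finite because each $b_i$ has only finitely many nonzero $M_s$-projections. I first claim that any MCD $d$ of $\{b_i\}$ in $M_\varphi$ must satisfy $d_s = 0$ for every $s \notin T$: if $d_s > 0$ for some such $s$, then $d + q_s$ (with $q_s > 0$ its complement) is a common divisor strictly above $d$, via the same complement-and-uniqueness computation as in the backward direction (using $(b_i)_s = 0$ for all $i$), contradicting maximality. Consequently, each $d_s$ with $s \in T$ ranges over the finite set of $M_s$-projections (finite because $M_s$ is finitely generated). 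For each tuple $(d_s)_{s \in T}$, let $l_{i,s} \in \nn_0$ be the unique integer such that $(b_i)_s - d_s + l_{i,s} s$ is an $M_s$-projection in $M_s$, as dictated by Proposition~\ref{prop:unique decomposition}, and set $a_i := (b_i)_0 - \sum_{s \in T} l_{i,s} s$. When every $a_i \in M$, the $k$-MCD hypothesis on $M$ yields an MCD $d_0^{(d_s)}$ of $\{a_i\}$ in $M$, producing a candidate $d^{(d_s)} := d_0^{(d_s)} + \sum_{s \in T} d_s \in M_\varphi$. Any maximal element of the finite set of candidates $\{d^{(d_s)}\}$ under divisibility is an MCD of $\{b_i\}$ in $M_\varphi$, since iterating the complement-extension shows every common divisor is dominated by some candidate.

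The main obstacle is the forward direction, specifically verifying that the maximum of the candidate set is genuinely a maximal common divisor in $M_\varphi$ and not merely maximal among candidates. This requires careful tracking of how $M$- and $M_s$-projections interact under divisibility in $M_\varphi$, via Propositions~\ref{prop:unique decomposition}, \ref{prop:relation of M-projections}(2), and~\ref{prop:divisibility relation of M-projections}(2), together with the uniqueness of the $l_{i,s}$'s implicit in the proof of Proposition~\ref{prop:unique decomposition}.
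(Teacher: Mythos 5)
Your backward direction is correct and essentially the paper's argument in a slightly different package: the paper fixes a single $y \in S$ with $d_y > 0$ and shows the complement $p_y$ is a nonzero common divisor of the $b_i - d$, contradicting maximality; you add all complements $q_s$ at once to form $d^+$ and conclude $d^+ = d$. Both hinge on Proposition~\ref{prop:relation of M-projections}(2) and the uniqueness in Proposition~\ref{prop:unique decomposition}, and both then pass to $M$ via Proposition~\ref{prop:divisibility relation of M-projections}(1). No issue there.

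The forward direction is where the genuine gap lies, and you have correctly diagnosed it but not closed it. Your plan is to enumerate candidate MCDs indexed by tuples of $M_s$-projections over the finite support set $T$, and then take a maximal candidate; but, as you note, there is no argument that a common divisor of $\{b_i\}$ which is not itself a candidate is dominated by one. The complement-extension only pushes $M_s$-projections to zero for $s \notin T$; it says nothing about how to reconcile a common divisor's $M$-projection $c_0$ against the candidate's $M$-projection, which is the MCD of the $a_i$ for a \emph{fixed} tuple. Different tuples give incomparable candidates, and there is no a priori reason a maximal candidate is maximal among \emph{all} common divisors. There is also an unaddressed technical point: $a_i := (b_i)_0 - \sum_{s \in T} l_{i,s} s$ need not lie in $M$ for an arbitrary tuple, so some candidates simply do not exist, further muddying the enumeration.

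The paper avoids all of this with one preliminary reduction that your proposal is missing: first take $d$ to be an MCD in $M$ of the $M$-projections $(b_1)_0, \dots, (b_k)_0$ (this is where the $k$-MCD hypothesis on $M$ enters), and pass to $b_1 - d, \dots, b_k - d$. Now their $M$-projections have no common divisor in $M$ other than $0$, so by Proposition~\ref{prop:divisibility relation of M-projections}(1) any common divisor $c$ of $\{b_i - d\}$ in $M_\varphi$ has $c_0 = 0$; and by Proposition~\ref{prop:divisibility relation of M-projections}(2) one cannot have $c_s > (b_i - d)_s$ for every $i$, since that would force $s$ to divide every $(b_i - d)_0$. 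Hence each $c_s$ is bounded above by $\max_i (b_i - d)_s$, only finitely many $s$ can contribute, and the set of common divisors of $\{b_i - d\}$ in $M_\varphi$ is therefore finite; a maximal element of a finite nonempty set of common divisors in a reduced monoid is an MCD. That single normalization kills the $M$-projection degree of freedom that your candidate enumeration struggles to control, which is exactly the source of your acknowledged obstacle.
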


\begin{proof}
	Fix $k \in \nn$. It suffices to argue the statement of the lemma for $k \ge 2$. 
	
	For the direct implication, suppose that $M$ is a $k$-MCD monoid. To argue that $M_\varphi$ is also a $k$-MCD monoid, fix $b_1, b_2, \dots, b_k \in M_\varphi$. Let $d$ be a maximal common divisor in $M$ of the $M$-projections of $b_1, b_2, \dots, b_k$. Proving that $b_1, b_2, \dots, b_k$ have a maximal common divisor in $M_\varphi$ amounts to showing that $b_1-d, b_2-d, \dots, b_k-d$ have a maximal common divisor in $M_\varphi$. Clearly, the only common divisor of the $M$-projections of $b_1- d, b_2-d, \dots, b_k-d$ in $M$ is $0$. As a consequence, it follows from part~(1) of Proposition~\ref{prop:divisibility relation of M-projections} that the $M$-projection of any common divisor of $b_1- d, b_2-d, \dots, b_k-d$ in $M_\varphi$ must be~$0$. We are done once we argue that the set $C$ consisting of all common divisors of $b_1- d, b_2-d, \dots, b_k-d$ in $M_\varphi$ is finite. To do this, take $c \in C$, and let $c = \sum_{s \in S} c_s$ be the $\varphi$-lifting decomposition of $c$. Suppose that $c_s \neq 0$ for some $s \in S$. If for every $i \in \ldb 1,k \rdb$ the inequality $c_s > (b_i- d)_s$ holds, then it follows from part~(2) of Proposition~\ref{prop:divisibility relation of M-projections} that $s$ divides the $M$-projection of $b_i - d$ in $M$, which we have already observed that is not possible. Hence $c_s \le \max\{(b_i - d)_s : i \in \ldb 1,k \rdb \}$. Now let $T$ be the finite set consisting of all $s \in S$ such that $(b_i - d)_s > 0$ for some $i \in \ldb 1,k \rdb$. It is clear that
	\begin{align*} \label{eq:aux finite set I}
		C &\subseteq D := \bigg\{ \sum_{s \in S} d_s : d_s \in M_s, \ s \nmid_{M_s} d_s \text{ for all } s \in S,  \text{ and } d_s \le (b_i - d)_s  \text{ for some } i \in \ldb 1,k \rdb \bigg\}.
	\end{align*}
	We claim that $D$ is a finite set. To see this, take $\sum_{s \in S} d_s \in D$, where $d_s \in M_s$ and $s \nmid_{M_s} d_s$ for all $s \in S$. Then for each $s \in S \setminus T$, the fact that $(b_i - d)_s = 0$ for every $i \in \ldb 1,k \rdb$ implies that $d_s = 0$. As a result, $\sum_{s \in S} d_s = \sum_{s \in T} d_s$. Thus, the fact that $T$ is a finite set, along with the inequalities $d_s \le (b_i - d)_s$ for some $i \in \ldb 1,k \rdb$ (for all $s \in S$) in the definition of $D$, guarantees that $D$ is a finite set. This in turn implies that~$C$ is a finite set. As a result, the elements $b_1, b_2, \dots, b_k$ have a maximal common divisor in $M_\varphi$. Hence $M_\varphi$ is also a $k$-MCD monoid.
	\smallskip
	
	Conversely, suppose that $M_\varphi$ is a $k$-MCD monoid. Let us argue that the arbitrarily chosen elements $b_1, b_2, \dots, b_k \in M$ have a maximal common divisor in $M$. Let $d$ be a maximal common divisor of such elements in $M_\varphi$. Let $d = d_0 + \sum_{s \in S} d_s$ be the $\varphi$-lifting decomposition of $d$. Suppose, by way of contradiction, that $d_y > 0$ for some $y \in S$. By part~(2) of Proposition~\ref{prop:relation of M-projections}, there exists exactly one $M_y$-projection $p_y$ such that $d_y + p_y \in \nn_0 y$. Note that $p_y > 0$ because $d_y > 0$. Now fix $i \in \ldb 1,k \rdb$, and let $b_i - d = x_0 + \sum_{s \in S} x_s$ be the $\varphi$-lifting decomposition of $b_i - d$. Since the $M_y$-projection of $b_i$ is $0$, we see that $d_y + x_y \in \nn_0 y$, and so the uniqueness of $p_y$ guarantees that the $M_y$-projection of $b_i - d$ is~$p_y$. Therefore $p_y \mid_{M_{\varphi}} b_i - d$ for every $i \in \ldb 1,k \rdb$. However, this contradicts that $d$ is a maximal common divisor of $b_1, b_2, \dots, b_k$ in $M_\varphi$. As a consequence, $d_s = 0$ for any $s \in S$, which implies that $d \in M$. Then it follows from part~(1) of Proposition~\ref{prop:divisibility relation of M-projections} that $d$ is a common divisor of $b_1, \dots, b_k$ in $M$. Finally, observe that the only common divisor of $b_1 - d, b_2 - d, \dots, b_k - d$ in $M$ is $0$ because the only common divisor of the same elements in $M_\varphi$ is $0$. Hence $d$ is a maximal common divisor of $b_1, b_2, \dots, b_k$ in $M$. We conclude then that $M$ is also a $k$-MCD monoid.
\end{proof}

We proceed to construct a rank-one torsion-free monoid that will play a central role in the rest of this paper. Fix $\epsilon \in \rr_{>0}$ with $\epsilon < \frac1{10}$, and let $(a_n)_{n \ge 2}$ be a sequence consisting of positive rationals such that $(\mathsf{d}(a_n))_{n \ge 2}$ is a strictly increasing sequence of primes and $\sum_{n \ge 2} a_n < \frac\epsilon{8}$. Furthermore, let us assume that the underlying set $A_\epsilon$ of $(a_n)_{n \ge 2}$ is a sparing set. Let $b_1$ and $c_1$ be rationals in the interval  $(1-\frac\epsilon{8}, 1)$ with distinct prime denominators such that the primes $\mathsf{d}(b_1)$ and $\mathsf{d}(c_1)$ are both spared by $A_\epsilon$. In particular, $b_1, c_1 \notin \langle A_\epsilon \rangle$. Then let $(b_n)_{n \ge 1}$ and $(c_n)_{n \ge 1}$ be the sequences of rationals whose terms are respectively defined by the following equalities:
\begin{equation} \label{eq:def of the b_j's and c_j_s}
	b_n := b_1 - \sum_{k=2}^n a_k \quad \text{ and } \quad c_n := c_1 - \sum_{k=2}^n a_k
\end{equation}
for every $n \in \nn_{\ge 2}$. Observe that $b_n = b_{n+1} + a_{n+1}$ and $c_n = c_{n+1} + a_{n+1}$ for every $n \in \nn$. Now we let $B_\epsilon$ and $C_\epsilon$ denote the underlying sets of the sequences $(b_n)_{n \ge 1}$ and $(c_n)_{n \ge 1}$, respectively. Observe that the elements of both $B_\epsilon$ and $C_\epsilon$ belong to the interval $(1-\frac{\epsilon}4, 1)$. Lastly, let $M$ be the monoid generated by the set $A_\epsilon \cup B_\epsilon \cup C_\epsilon$; that is,
\begin{equation} \label{eq:main monoid}
	M := \big\langle A_\epsilon \cup B_\epsilon \cup C_\epsilon \big\rangle. 
\end{equation}
The monoid $M$ is a Puiseux monoid and, therefore, a rank-one torsion-free monoid. Throughout the rest of this section, we adopt the notation introduced in this paragraph.

\begin{lemma} \label{lem:a monoid that is not 2-MCD}
	Let the monoid $M$ and the sets $A_\epsilon, B_\epsilon$, and $C_\epsilon$ be as in~\eqref{eq:main monoid}. Then the following statements hold.
	\begin{enumerate}
		\item[(1)] Any nonempty finite subset of $B_\epsilon \cup C_\epsilon$ has a common divisor in $M$ that belongs to~$A_\epsilon$.
		\smallskip
		
		\item[(2)] The monoid $M$ is not a $2$-MCD monoid.
	\end{enumerate}
\end{lemma}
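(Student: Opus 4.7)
The plan for part (1) is a short telescoping. Given a nonempty finite set $S \subseteq B_\epsilon \cup C_\epsilon$, choose $N \in \nn$ with $S \subseteq \{b_n, c_n : n \leq N\}$. Because $b_n = b_{N+1} + a_{n+1} + \cdots + a_{N+1}$ and $c_n = c_{N+1} + a_{n+1} + \cdots + a_{N+1}$ for every $n \leq N$, the element $a_{N+1} \in A_\epsilon$ divides each member of $S$ in $M$ and is therefore the desired common divisor.

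For part (2), the plan is to exhibit two elements with no MCD, namely $b_1$ and $c_1$, and the proof splits into a structure step and a lifting step. \emph{Structure step:} every common divisor $d$ of $b_1$ and $c_1$ in $M$ lies in $\langle A_\epsilon \rangle$, and there exist indices $j, j' \geq 1$ together with elements $r, r' \in \langle A_\epsilon \rangle$ satisfying $b_1 - d = b_j + r$ and $c_1 - d = c_{j'} + r'$. \emph{Lifting step:} given such a $d$ with these associated data, pick any $n > \max\{j, j'\}$. The identity $b_j = b_n + a_{j+1} + \cdots + a_n$ (and its analogue for $c_{j'}$) yields $b_1 - (d + a_n) = b_n + a_{j+1} + \cdots + a_{n-1} + r \in M$, and symmetrically $c_1 - (d + a_n) \in M$. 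So $d + a_n$ is a common divisor of $b_1, c_1$ properly refining $d$ (the monoid $M$ is reduced, being a Puiseux monoid), and therefore $d$ fails to be maximal.

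The technical heart is the structure step, and it rests on combining a size bound with a $p$-adic valuation computation. Size: the inequalities $b_n, c_n > 1 - \epsilon/4 > 1/2$ and $b_1, c_1 < 1$ force any $M$-representation of an element below $1$ to use at most one generator from $B_\epsilon \cup C_\epsilon$. Valuation: $v_{\mathsf{d}(b_1)}(b_j) = -1$ for every $j \geq 1$, while $v_{\mathsf{d}(b_1)}(a_i) \geq 0$ and $v_{\mathsf{d}(b_1)}(c_j) \geq 0$, since $\mathsf{d}(b_1)$ is spared by $A_\epsilon$ and is distinct from $\mathsf{d}(c_1)$. To rule out a $B_\epsilon$-generator appearing inside $d$, assume $d$ contains some $b_j$; the size bound then gives $d = b_j + r$ for some $r \in \langle A_\epsilon \rangle$ of size less than $\epsilon/4$, whence $c_1 - d < \epsilon/4$. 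On the other hand, the valuation argument applied to $c_1 - d$ forces some $b_i$ to appear in its $M$-representation, yielding $c_1 - d \geq b_i > 1 - \epsilon/4$, a contradiction. The symmetric argument using $v_{\mathsf{d}(c_1)}$ excludes $C_\epsilon$-generators from $d$. Once $d \in \langle A_\epsilon \rangle$, the same size-plus-valuation combination applied to $b_1 - d$ and $c_1 - d$ pins down the stated shape.

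The main obstacle will be executing the structure step carefully: the size bound alone admits too many candidate representations, and the valuation argument alone ignores coefficient magnitudes, so the two ingredients must be combined with some care. Once this is in hand, the lifting step reduces to the short telescoping displayed above.
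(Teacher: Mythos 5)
Your proof is correct and follows essentially the same route as the paper's: both show that any common divisor $d$ of $b_1$ and $c_1$ must lie in $\langle A_\epsilon \rangle$ (via a size bound combined with $\mathsf{d}(b_1)$- and $\mathsf{d}(c_1)$-adic valuations), then use the telescoping identities $b_n = b_{n+1} + a_{n+1}$, $c_n = c_{n+1} + a_{n+1}$ to find a common divisor in $A_\epsilon$ of the residuals $b_1 - d$ and $c_1 - d$. The only cosmetic difference is that the paper, once it knows $b_1-d$ and $c_1-d$ are each divisible by some element of $B_\epsilon \cup C_\epsilon$, simply invokes part (1), whereas you pin down the explicit form $b_1 - d = b_j + r$, $c_1 - d = c_{j'} + r'$ and re-derive the telescoping divisor $a_n$ by hand; your structure step is thus slightly sharper than what is needed, but entirely sound.
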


\begin{proof}
	(1) Let $S$ be  a nonempty finite subset of $B_\epsilon \cup C_\epsilon$, and then set
	\[
		m_B := \max \{n \in \nn : b_n \in S\} \quad \text{ and } \quad m_C := \max \{n \in \nn : c_n \in S\}.
	\]
	Let $m$ be the maximum of $m_B$ and $m_C$. Since $(b_n + M)_{n \ge 1}$ is an ascending chain of principal ideals of~$M$, if $b \in S \cap B_\epsilon$, then $b_m \mid_M b$. Similarly, if $c \in S \cap C_\epsilon$, then $c_m \mid_M c$. As a consequence, the fact that $a_{m+1}$ is a common divisor of $b_m$ and $c_m$ in $M$ guarantees that $a_{m+1}$ is a common divisor of the set~$S$ in $M$.
	\smallskip
	
	(2) We will prove that the elements $b_1$ and~$c_1$ have no maximal common divisors in $M$. Towards this end, it suffices to fix an arbitrary common divisor $d$ of $b_1$ and $c_1$ in $M$ and argue that the elements $y := b_1-d$ and $z := c_1-d$ have a nonzero common divisor in $M$.
	
	We first show that $d \in \langle A_\epsilon \rangle$. Assume, towards a contradiction, that this is not the case. Since $A_\epsilon \cup B_\epsilon \cup C_\epsilon$ is a generating set of $M$, it follows that $d$ is divisible in $M$ by an element $d_0 \in B_\epsilon \cup C_\epsilon$. Write $d = d_0 + e$ for some $e \in M$. Because $B_\epsilon \cup C_\epsilon$ is a subset of $(1-\epsilon,1)$, it follows that $d_0 > 1-\epsilon$ and $d < 1$, which imply that 
	\[
		e = d - d_0 < 1 - (1-\epsilon) = \epsilon < 1 - \epsilon < \inf B_\epsilon \cup C_\epsilon.
	\]
	Thus, $e$ is not divisible in $M$ by any element of $B_\epsilon \cup C_\epsilon$, and so $e \in \langle A_\epsilon \rangle$. In addition, the fact that $y = (b_1 - d_0) - e < \epsilon < \inf B_\epsilon \cup C_\epsilon$ guarantees that $y \in \langle A_\epsilon \rangle$. One can similarly check that $z \in \langle A_\epsilon \rangle$. Now we can see that $d_0 \notin B_\epsilon$ as, otherwise, the containment $c_1 = d_0 + e + z \in \langle A_\epsilon \cup B_\epsilon \rangle$ would contradict that $v_{\mathsf{d}(c_1)}(x) \ge 0$ for every $x \in \langle A_\epsilon \cup B_\epsilon \rangle$. Similarly, we obtain that $d_0 \notin C_\epsilon$. Hence $d_0 \notin B_\epsilon \cup C_\epsilon$, which is a contradiction. As a result, $d \in \langle A_\epsilon \rangle$.
	
	Finally, we are in a position to show that $y$ and $z$ have a nonzero common divisor in $M$. Observe that~$y$ must be divisible in $M$ by an element $y' \in B_\epsilon \cup C_\epsilon$ as, otherwise, $b_1 = d + y \in \langle A_\epsilon \rangle$. Similarly,~$z$ must be divisible in $M$ by an element $z' \in B_\epsilon \cup C_\epsilon$. Now, in light of part~(1), the set $\{y', z'\}$ must have a common divisor $d'$ that belongs to $A_\epsilon$. Hence $d'$ is a positive common divisor of $y$ and $z$, which concludes our proof.
\end{proof}

The first examples of atomic integral domains that are not strongly atomic were constructed in~\cite[Example~5.2]{mR93}. On the other hand, the first finite-rank torsion-free atomic monoid that is not strongly atomic was constructed in~\cite[Example~4.3]{GV23}. As mentioned in the introduction, the question of whether there exists an atomic Puiseux monoid that is not strongly atomic was posed in the latter \cite[Question~4.4]{GV23} and positively answered in \cite{GLRRT24}. We proceed to show how to use the lifting construction, along with Lemmas~\ref{lem:k-MCD transfer under lift construction} and~\ref{lem:a monoid that is not 2-MCD}, to construct, in a more systematic way, another example of an atomic Puiseux monoid that is not strongly atomic.

\begin{example} \label{ex:an atomic PM that is not strongly atomic}
	Let the monoid $M$ and the sets $A_\epsilon, B_\epsilon$, and $C_\epsilon$ be as in~\eqref{eq:main monoid}. Observe that $A_\epsilon \cup B_\epsilon \cup C_\epsilon$ is a sparing set, and so $M$ is a sparing monoid. Therefore we can take an infinite set $P$ consisting of primes and satisfying that $v_p(q) \ge 0$ for all $p \in P$ and $q \in M^\bullet$. Now, let $S$ be a generating set of $M$ not containing $0$ (for example, we may take $S$ to be $A_\epsilon \cup B_\epsilon \cup C_\epsilon$, or we may just take $S$ to be $M^\bullet$). Since $M$ is not finitely generated, any generating set of $M$ is countable, and so we can take an injection $\pi \colon S \to P$ such that $v_{\pi(s)}(s) = 0$ for all $s \in S$. Now consider the lifting function $\varphi \colon S \to \pp \times \mathcal{N}$ given by the assignments $s \mapsto (\pi(s), \nn_0)$ and the lifting monoid $M_\varphi$ with respect to $\varphi$. For each $s \in S$, it is clear that $\frac s{\pi(s)} \in \mathcal{A}(M_s)$: indeed, $\frac s{\pi(s)} = \min M_s^\bullet$. Therefore the fact that $\frac s{\pi(s)} \in \mathcal{A}(M_s) \setminus \{s\}$, in tandem with part~(2) of Proposition~\ref{prop:atoms in lifting constructions}, guarantees that $\frac s{\pi(s)} \in \mathcal{A}(M_\varphi)$. Furthermore, each $s \in S$ can be written as the sum of $\pi(s)$ copies of $\frac s{\pi(s)}$, and so each element of $S$ is atomic in $M_\varphi$. Since $M = \langle S \rangle$, every element of $M$ is atomic in $M_\varphi$. Moreover, for every $s \in S$, the elements of $M_s^\bullet$ are sums of finitely many copies of $\frac{s}{\pi(s)}$, so the elements of $M_s$ are atomic in~$M_\varphi$. As it follows from Proposition~\ref{prop:unique decomposition} that $M_\varphi$ can be generated by $M \cup \big( \bigcup_{s \in S} M_s \big)$, we conclude that $M_\varphi$ is atomic. Since $M$ is not a $2$-MCD monoid by Lemma~\ref{lem:a monoid that is not 2-MCD}, it follows from Lemma~\ref{lem:k-MCD transfer under lift construction} that $M_\varphi$ is not a $2$-MCD monoid. As a result, $M_\varphi$ is an atomic Puiseux monoid that is not strongly atomic.
\end{example}

We conclude this subsection with the following easy lemma on the atomicity of both $\langle A_\epsilon \rangle$ and $M$, which will be used in the proof of our main theorem (the last part of the lemma justifies why the monoid $M$ was not our candidate in Example~\ref{ex:an atomic PM that is not strongly atomic} for an example of an atomic Puiseux monoid that is not strongly atomic).

\begin{lemma} \label{lem:atoms of the non-atomic monoid M}
		Let the monoid $M$ and the set $A_\epsilon$ be as in~\eqref{eq:main monoid}. Then the following statements hold.
		\begin{enumerate}
			\item The monoid $\langle A_\epsilon \rangle$ is atomic, and $\mathcal{A}(\langle A_\epsilon\rangle) = A_\epsilon$.
			\smallskip
			
			\item For any $i_1, \dots, i_n \in \nn$ with $2 \le i_1 < \dots < i_n$, the element $a_{i_1} + \dots + a_{i_n}$ has a unique additive factorization in $\langle A_\epsilon \rangle$.
			\smallskip
			
			\item The monoid $M$ is not atomic, and $\mathcal{A}(M) = A_\epsilon$.
		\end{enumerate}
\end{lemma}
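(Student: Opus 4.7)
The approach is to first prove a uniqueness-of-representation lemma and then derive all three parts from it. The lemma reads: for any $m \in \langle A_\epsilon \rangle$ with $0 < m < 1$, there is a unique almost-all-zero sequence $(\alpha_k)_{k \ge 2}$ in $\nn_0$ satisfying $m = \sum_{k \ge 2} \alpha_k a_k$. To prove it, suppose two such representations $(\alpha_k)$ and $(\beta_k)$ exist and disagree, and let $j$ be the largest index with $\alpha_j \neq \beta_j$. Using that $\alpha_k = \beta_k$ for $k > j$, one rearranges to
\[
(\alpha_j - \beta_j)\, a_j \;=\; \sum_{k < j} (\beta_k - \alpha_k)\, a_k.
\]
Setting $p_j := \mathsf{d}(a_j)$, the pairwise-distinctness of the primes $\mathsf{d}(a_k)$ gives $v_{p_j}(a_k) \ge 0$ for all $k \neq j$, so applying $v_{p_j}$ forces $p_j \mid \alpha_j - \beta_j$ and hence $\max(\alpha_j, \beta_j) \ge p_j$. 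But then $m \ge \max(\alpha_j, \beta_j)\, a_j \ge p_j a_j = \mathsf{n}(a_j) \ge 1$, contradicting $m < 1$.

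Parts (1) and (2) now follow quickly. For (2), the element $a_{i_1} + \dots + a_{i_n}$ is bounded above by $\sum_{k \ge 2} a_k < \epsilon/8 < 1$, so the lemma forces the displayed representation to be the unique additive factorization. For (1), each $a_n$ (which also lies below $1$) is an atom of $\langle A_\epsilon \rangle$: a hypothetical decomposition $a_n = x + y$ with $x, y \in \langle A_\epsilon \rangle^\bullet$ would, upon expanding $x$ and $y$ as $\nn_0$-combinations of $A_\epsilon$, produce a representation of $a_n$ with total coefficient sum at least $2$, contradicting uniqueness. Conversely, any element of $\langle A_\epsilon \rangle^\bullet$ not lying in $A_\epsilon$ is a sum of at least two elements of $A_\epsilon$ counted with multiplicity, so it admits a nontrivial split and is not an atom. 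Therefore $\mathcal{A}(\langle A_\epsilon \rangle) = A_\epsilon$, and atomicity is automatic since every element of $\langle A_\epsilon \rangle^\bullet$ is a finite sum of elements of $A_\epsilon$.

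For (3), the first step is to check $A_\epsilon \subseteq \mathcal{A}(M)$: if $a_n = x + y$ in $M$ with $x, y$ nonzero, then $x, y \le a_n < \epsilon/8 < 1 - \epsilon/4$, while every element of $B_\epsilon \cup C_\epsilon$ exceeds $1 - \epsilon/4$; hence the expressions of $x$ and $y$ as $\nn_0$-combinations of the generating set $A_\epsilon \cup B_\epsilon \cup C_\epsilon$ cannot involve any element of $B_\epsilon \cup C_\epsilon$, forcing $x, y \in \langle A_\epsilon \rangle^\bullet$ and contradicting (1). The reverse inclusion $\mathcal{A}(M) \subseteq A_\epsilon$ holds because any atom of $M$ must reduce to a single generator (else it admits a nontrivial split by isolating a generator in its expansion), and the relations $b_n = b_{n+1} + a_{n+1}$ and $c_n = c_{n+1} + a_{n+1}$ rule out every element of $B_\epsilon \cup C_\epsilon$. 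Finally, $M$ is not atomic: if $b_1$ were atomic, it would be a finite sum of atoms from $A_\epsilon$ and thus lie in $\langle A_\epsilon \rangle$, yet $v_{\mathsf{d}(b_1)}(b_1) = -1$ while $\mathsf{d}(b_1)$ being spared by $A_\epsilon$ forces $v_{\mathsf{d}(b_1)} \ge 0$ on all of $\langle A_\epsilon \rangle$, a contradiction.

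The main obstacle is the uniqueness lemma of the first paragraph. The $p$-adic valuation trick by itself yields only the divisibility statement $p_j \mid \alpha_j - \beta_j$; it is only in combination with the size hypothesis $\sum_{k \ge 2} a_k < \epsilon/8 < 1$ that this divisibility becomes incompatible with the actual size of $m$. This is precisely why the construction requires the rapid decay of the sequence $(a_n)_{n \ge 2}$.
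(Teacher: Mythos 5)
Your proof is correct and takes essentially the same approach as the paper. The single technical engine in both cases is the combination of the $\mathsf{d}(a_j)$-adic valuation with the smallness bound $\sum_{n\ge 2} a_n < \epsilon/8 < 1$: the valuation gives $\mathsf{d}(a_j) \mid (\alpha_j - \beta_j)$, and the bound then rules out nonzero differences because $\mathsf{d}(a_j)\,a_j = \mathsf{n}(a_j) \ge 1$. The only organizational difference is that you isolate a slightly more general uniqueness-of-representation lemma (for all $m \in \langle A_\epsilon\rangle$ with $m < 1$) and deduce (1), (2), (3) from it, while the paper runs the same valuation argument inline for part (2) and treats part (1) tersely as an immediate consequence of the distinct-prime-denominator hypothesis; your closing remark correctly identifies that the valuation trick alone is insufficient without the size hypothesis, which is indeed the point of requiring rapid decay of $(a_n)$.
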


\begin{proof}
	(1) It suffices to argue that the set of atoms of $\langle A_\epsilon \rangle$ is $A_\epsilon$, which can be readily deduced from the fact that the terms of the sequence $(\mathsf{d}(a_n))_{n \ge 2}$ are pairwise distinct primes.
	\smallskip
	
	(2) Fix $i_1, \dots, i_n \in \nn$ with $2 \le i_1 < \dots < i_n$, and set $b := \sum_{j=1}^n a_{i_j}$. Observe that $b < 1$. In light of part~(1), any additive factorization of $b$ in $\langle A_\epsilon \rangle$ can be written as $\sum_{k=2}^\ell c_k a_k$ for some $c_2, \dots, c_\ell \in \nn_0$ and, after introducing zero coefficients if necessary, we can assume that $\ell \ge \max\{i_1, \dots, i_n\}$. Since $b < 1$, the inequality $c_k < \mathsf{d}(a_k)$ holds for every $k \in \ldb 2,\ell \rdb$. Therefore, for each $k \in \ldb 2,\ell \rdb$, after applying the $\mathsf{d}(a_k)$-adic valuation map on both sides of $ \sum_{j=1}^n a_{i_j} = \sum_{k=2}^\ell c_k a_k$, we see that $c_k = 0$ if $k \notin \{i_1, \dots, i_n\}$. Similarly, for each $j \in \ldb 1,n \rdb$, after applying the $\mathsf{d}(a_{i_j})$-valuation map on both sides of the same equality, we obtain that $c_{i_j} = 1$. Hence $b$ has a unique additive factorization, namely, the formal sum $\sum_{j=1}^n a_{i_j}$.
	\smallskip
	
	(3) Since $v_{\mathsf{d}(b_1)}(x) \ge 0$ for every $x \in \langle A_\epsilon \rangle$, we see that $b_1 \notin \langle A_\epsilon \rangle$. Thus, once we check that $\mathcal{A}(M) = A_\epsilon$, we will also obtain that $M$ is not atomic. Let us argue the desired equality. Because $A_\epsilon \cup B_\epsilon \cup C_\epsilon$ is a generating set of $M$, the equalities $b_n = b_{n+1} + a_{n+1}$ and $c_n = c_{n+1} + a_{n+1}$ for every $n \in \nn$ guarantee that $\mathcal{A}(M) \subseteq A_\epsilon$. On the other hand, the fact that no element of $B_\epsilon \cup C_\epsilon$ can divide any element of $A_\epsilon$ in $M$ (because $\sup A_\epsilon < \frac12 < \inf B_\epsilon \cup C_\epsilon$), together with part~(1), guarantees that $A_\epsilon \subseteq \mathcal{A}(M)$.
\end{proof}

\medskip
%%%%%%%%%%%%%%%%%%%%%%%%%%%%%%%%
\subsection{On The Ascent of Atomicity to Monoid Algebras}

This final section is dedicated to establish our main theorem: we construct a rank-one torsion-free atomic monoid whose monoid algebras (over \emph{any} commutative ring) are not atomic. In Example~\ref{ex:an atomic PM that is not strongly atomic}, we have already exhibited a rank-one torsion-free atomic monoid that is not strongly atomic. However, the monoid we need in our main theorem should be constructed by choosing a more subtle lifting function for the monoid $M$ in~\eqref{eq:main monoid}. For the convenience of the readers, we restate our main theorem (also stated in the introduction).

\begin{main-theorem}
	There exists a rank-one torsion-free atomic monoid $M$ such that the monoid algebra $R[M]$ is not atomic for any integral domain~$R$.
\end{main-theorem}

\begin{proof}
	Let the monoid $M$ and the sequences $(a_n)_{n \ge 2}$, $(b_n)_{n \ge 1}$, and $(c_n)_{n \ge 1}$ be as defined in the paragraph before Lemma~\ref{lem:a monoid that is not 2-MCD}: the corresponding underlying sets of $(a_n)_{n \ge 2}$, $(b_n)_{n \ge 1}$, and $(c_n)_{n \ge 1}$ are $A_\epsilon$, $B_\epsilon$, and~$C_\epsilon$, and the monoid $M$ is generated by the set $A_\epsilon \cup B_\epsilon \cup C_\epsilon$. Also, recall that $\epsilon < \frac1{10}$ while
	\[
		\sum_{n \ge 2} a_n < \frac{\epsilon}8 \quad \text{ and } \quad 1 - \frac{\epsilon}4 < \inf (B_\epsilon \cup C_\epsilon) < \max\{b_1, c_1\}  < 1.
	\]
	Now take $\delta \in \rr_{> 0}$ such that
	\begin{equation} \label{eq:delta}
		\delta < \min \Big\{  \inf (B_\epsilon \cup C_\epsilon) - \Big(1 - \frac{\epsilon}4 \Big), 1 - \max\{b_1, c_1\} \Big\}.
	\end{equation}
	
	The first step of this proof is to find a lifting function $\varphi \colon S \to \pp \times \mathcal{N}$ that yields a convenient lifting monoid of $M$. To do so, set $S := B_\epsilon \cup C_\epsilon$. Since $M$ is sparing, we can take an infinite set~$P$ consisting of odd primes and satisfying the inequality $v_p(q) \ge 0$ for all $p \in P$ and $q \in M^\bullet$. Let $(s_n)_{n \ge 1}$ be a sequence of pairwise distinct elements whose underlying set is $S$. For every $n \in \nn$, we can take $p_n \in P$ large enough so that $v_{p_n}(s_n) = 0$ and $\frac{s_n}{p_n} <  \delta$, and then take $h_n \in \ldb 2, p_n - 2 \rdb$ such that $h_n \frac{s_n}{p_n}$ belongs to a $\delta$-neighborhood of $\frac12 - \epsilon$ if $s_n \in B_\epsilon$ and to a $\delta$-neighborhood of $\frac12$ if $s_n \in C_\epsilon$. We can further assume that the sequence $(p_n)_{n \ge 1}$ is strictly increasing, and set $k_n := p_n - h_n \in \ldb 2, p_n - 2 \rdb$ for every $n \in \nn$. When $s_n \in B_\epsilon$ one can use the fact that $h_n \frac{s_n}{p_n}$ belongs to a $\delta$-neighborhood of $\frac12 - \epsilon$ to infer that
	\begin{equation} \label{eq:aux eq for B}
		\Big{|} h_n \frac{s_n}{p_n} - \Big(\frac 12 - \epsilon \Big) \Big{|} < \frac{\epsilon}2 \quad \text{ and } \quad \Big{|} k_n \frac{s_n}{p_n} - \Big(\frac 12 + \epsilon \Big) \Big{|} < \frac{\epsilon}2
	\end{equation}
	(the first inequality is clear because $\delta < \frac{\epsilon}4$, while the second one can be obtained by using the inequalities $1 - \frac{\epsilon}4 + \delta < s_n < 1 - \delta$ and the equality $h_n + k_n = p_n$). When $s_n \in C_\epsilon$, we can similarly arrive to the inequalities
	\begin{equation} \label{eq:aux eq for C}
		\Big{|} h_n \frac{s_n}{p_n} - \frac 12 \Big{|} < \frac{\epsilon}4 \quad \text{ and } \quad \Big{|} k_n \frac{s_n}{p_n} - \frac 12 \Big{|} < \frac{\epsilon}4.
	\end{equation}
	From this point on, we set $p_s := p_n$, $h_s := h_n$, and $k_s := k_n$ for all $s \in S$ and $n \in \nn$ such that $s = s_n$ (this will ease the notation throughout the rest of the proof).
	
	Now consider the function $\pi \colon S \to \pp$ defined by $\pi(s) := p_s$ for every $s \in S$. Observe that $\pi$ is an injection and also that $v_{\pi(s)}(s) = 0$ for all $s \in S$. For each $s \in S$, consider the numerical monoid $N_s := \langle h_s, k_s \rangle$, which contains $\pi(s) $ because $h_s + k_s = \pi(s)$. Now define the lifting function $\varphi \colon S \to \pp \times \mathcal{N}$ by setting $\varphi(s) := (\pi(s), N_s)$ for every $s \in S$, and let $M_\varphi$ be the lifting monoid of $M$ with respect to~$\varphi$. For each $s \in S$, set $H_s := h_s \frac{s}{\pi(s)}$ and $K_s := k_s \frac{s}{\pi(s)}$, which implies that $M_s = \langle H_s, K_s \rangle$. Note that $H_s + K_s = s$ for all $s \in S$. We can now rewrite the inequalities in~\eqref{eq:aux eq for B} and~\eqref{eq:aux eq for C} as follows:
	\begin{equation} \label{eq:aux interval for s in B}
		\Big{|} H_s - \Big(\frac 12 - \epsilon \Big) \Big{|} < \frac{\epsilon}2 \quad \text{ and } \quad \Big{|} K_s - \Big(\frac 12 + \epsilon \Big) \Big{|} < \frac{\epsilon}2
	\end{equation}
	if $s \in B_\epsilon$, while
	\begin{equation} \label{eq:aux interval for s in C}
		\Big{|} H_s - \frac 12 \Big{|} < \frac{\epsilon}4 \quad \text{ and } \quad \Big{|} K_s - \frac 12 \Big{|} < \frac{\epsilon}4
	\end{equation}
	if $s \in C_\epsilon$. Since $\epsilon < \frac1{10}$, it follows from the inequalities in~\eqref{eq:aux interval for s in B} and~\eqref{eq:aux interval for s in C} that $\min M_s^\bullet > \frac12 - \frac{3}{2} \epsilon > \frac7{20} > \frac 13$ for all $s \in S$. Our next step is to argue that the lifting monoid $M_\varphi$ is atomic. To do so, we first need to verify the following claim.
	\smallskip
	
	\noindent \textsc{Claim 1.} $M_\varphi \cap \qq_{\le 1/3} \subseteq \langle A_\epsilon \rangle$.
	\smallskip
	
	\noindent \textsc{Proof of Claim 1.} Take $q \in M_\varphi$ such that $q \le \frac13$. For each $s \in S$, the inequality $\min M_s^\bullet > \frac 13$ guarantees that no nonzero element in $M_s$ divides $q$ in $M_\varphi$. Since $M_\varphi = \big\langle \bigcup_{r \in M} M_r \big\rangle$, we see that $q \in M$. In addition, the chain of inequalities $\inf B_\epsilon \cup C_\epsilon > 1 - \epsilon > \frac 13$ implies that $q$ is not divisible in $M$ by any element of $B_\epsilon \cup C_\epsilon$. This, along with the fact that $A_\epsilon \cup B_\epsilon \cup C_\epsilon$ is a generating set of~$M$, ensures that $q \in \langle A_\epsilon \rangle$. Hence $M_\varphi \cap \qq_{\le 1/3} \subseteq \langle A_\epsilon \rangle$, and the claim is established.
	\smallskip
	
	We proceed to prove that $M_\varphi$ is atomic. For each $s \in S$, the fact that $h_s + k_s$ is prime implies that $\gcd(h_s, k_s) = 1$, and so $\mathcal{A}(M_s) = \{H_s, K_s\}$. Because $s \notin \{H_s, K_s\}$, part~(2) of Proposition~\ref{prop:atoms in lifting constructions} guarantees that $H_s, K_s \in \mathcal{A}(M_\varphi)$. Thus, for each $s \in S$, every element of $M_s$ is atomic in $M_\varphi$. In particular, every $s \in B_\epsilon \cup C_\epsilon$ is atomic in $M_\varphi$. On the other hand, the inequality $\sup A_\epsilon \le \frac13$, in tandem with Claim~1, implies that every element of $M_\varphi$ dividing an element of $A_\epsilon$ in $M_\varphi$ must belong to $\langle A_\epsilon \rangle$, and so the inclusion $A_\epsilon \subseteq \mathcal{A}(M_\varphi)$ follows from the equality $\mathcal{A}(\langle A_\epsilon \rangle) = A_\epsilon$ in part~(1) of Lemma~\ref{lem:atoms of the non-atomic monoid M}. Hence from the fact that $A_\epsilon \cup B_\epsilon \cup C_\epsilon$ is a generating set of $M$, we obtain that every element of $M$ is atomic in $M_\varphi$. As $M_\varphi$ is generated by the set $M \cup  \big( \bigcup_{s \in S} M_s \big)$, we further obtain that $M_\varphi$ is an atomic monoid with
	\begin{equation} \label{eq:atoms of the lifting monoid}
		\mathcal{A}(M_\varphi) := A_\epsilon \cup \big\{ H_s, K_s : s \in B_\epsilon \cup C_\epsilon  \big\}.
	\end{equation}
	
	The rest of the proof is dedicated to argue that $R[M_{\varphi}]$ is not an atomic integral domain for any commutative ring $R$. To do so, fix an arbitrary commutative ring $R$. If $R$ is not an integral domain, then $R[M_\varphi]$ is not an integral domain \cite[Theorem~8.1]{rG84}, and we are done. Thus, we will assume that~$R$ is an integral domain. 
	
	Proving that the monoid algebra $R[M_\varphi]$ is not atomic amounts to arguing that the polynomial expression $f(X) := X^{b_1} + X^{c_1} \in R[M_\varphi]$ cannot be factored into irreducibles. Assume, without loss of generality, that $b_1 > c_1$. Now suppose, by way of contradiction, that
	\begin{equation} \label{eq:factorization}
		f(X) = a_1(X) \cdots a_\ell(X),
	\end{equation}
	where $\ell \ge 2$ and $a_1(X), \dots, a_\ell(X)$ are irreducibles in $R[M_\varphi]$. Observe that, for each $i \in \ldb 1, \ell \rdb$, the polynomial expression $a_i(X)$ does not belong to $R$ because otherwise $a_i(X)$ would divide the leading coefficient of $f(X)$, namely, $a_i(X) \mid_{R[M_\varphi]} 1$, and so $a_i(X)$ would be a unit of $R[M_\varphi]$. Hence $\deg a_i(X) > 0$ for every $i \in \ldb 1, \ell \rdb$. Set $d_i := \deg a_i(X)$ and $o_i := \text{ord} \, a_i(X)$ for every $i \in \ldb 1, \ell \rdb$. Then $b_1 = \sum_{i=1}^\ell d_i$ and $c_1 = \sum_{i=1}^\ell o_i$. We split the rest of the proof into two cases.
	\smallskip
	
	\noindent \textsc{Case 1:} $o_i \in M$ for every $i \in \ldb 1,\ell \rdb$. Since $v_{\mathsf{d}(c_1)}(q) \ge 0$ for any $q \in \langle A_\epsilon \cup B_\epsilon \rangle$, after applying $\mathsf{d}(c_i)$-adic valuation on both sides of $c_1 = \sum_{i=1}^\ell o_i$, we find that $o_j \notin \langle A_\epsilon \cup B_\epsilon \rangle$ for some $j \in \ldb 1, \ell \rdb$. Assume, without loss of generality, that $j = 1$. Since $M$ is generated by $A_\epsilon \cup B_\epsilon \cup C_\epsilon$, we see that $o_1$ must be divisible in~$M$ by an element $c_k \in C_\epsilon$. Thus, $c_k \le o_1 \le c_1$, which implies that $o_1 \in (1- \epsilon, 1)$. This, along with the inequalities $o_1 \le d_1 \le b_1 < 1$, guarantees that $d_1 \in (1-\epsilon, 1)$. Therefore every element in the support of $a_1(X)$ must belong to $(1-\epsilon, 1)$, and so the fact that $a_1(X)$ is irreducible implies that $a_1(X)$ is not a monomial (as the degree of each irreducible monomial of $R[M_\varphi]$ belongs to $\mathcal{A}(M_\varphi)$ and so is less than $1-\epsilon$). As $\sum_{i=1}^\ell d_i = b_1 < 1$, the fact that $d_1 > 1-\epsilon$ ensures that $d_i < \epsilon < \frac1{10}$ for every $i \in \ldb 2, \ell \rdb$, and so every element in $\bigcup_{i=2}^\ell \, \text{supp} \, a_i(X)$ belongs to $[0, \frac1{10})$, and so it follows from Claim~1 that $\bigcup_{i=2}^\ell \, \text{supp} \, a_i(X) \subseteq \langle A_\epsilon \rangle$. 
	
	Since $a_1(X)$ is irreducible in $R[M_\varphi]$, there must be an element of $\text{supp} \, a_1(X)$ that is not divisible in $M_\varphi$ by any element of $B_\epsilon \cup C_\epsilon$, as otherwise, part~(1) of Lemma~\ref{lem:a monoid that is not 2-MCD} would guarantee the existence of $a \in A_\epsilon$ such that $X^a$ properly divides $a_1(X)$ in $R[M_\varphi]$ (because $a_1(X)$ is not a monomial). We consider the following (not necessarily mutually exclusive) subcases.
	\smallskip
	
	\noindent \textsc{Case 1.1:} $\text{supp} \, a_1(X) \subseteq M$. As mentioned in the previous paragraph, there exists $d \in \text{supp} \, a_1(X)$ that is not divisible in $M_\varphi$ by any element of $B_\epsilon \cup C_\epsilon$. As a consequence, the fact that $A_\epsilon \cup B_\epsilon \cup C_\epsilon$ is a generating set of $M$ implies that $d \in \langle A_\epsilon \rangle$. We can assume, without loss of generality, that $d$ is the maximum of $\langle A_\epsilon \rangle \cap \text{supp} \, a_1(X)$. Because $d_2, \dots, d_\ell \in \langle A_\epsilon \rangle$, it follows that the element $d' := d + \sum_{i=2}^\ell d_i$ belongs to $\langle A_\epsilon \rangle$, which implies that $d' \notin \{b_1, c_1\}$ and so that $b_1 > d' > c_1$. Therefore, after unfolding the right-hand side of \eqref{eq:factorization}, at least two terms of degree $d'$ must be produced so that they can cancel out (here we are using the fact that $R$ is an integral domain). Thus, there must be another term of degree $d'$ (besides that suggested by the defining equality $d' := d + \sum_{i=2}^\ell d_i$), which has to be produced using, as a factor, a monomial term of $a_1(X)$ whose degree $d'_1$ is strictly greater than~$d$. It follows from the maximality of $d$ that $d'_1 \notin \langle A_\epsilon \rangle$. Since $M_\varphi$ is atomic, it follows from~\eqref{eq:atoms of the lifting monoid} that either $H_s$ or $K_s$ must divide $d'_1$ in $M_\varphi$ for some $s \in S$. Since $d'_1 \mid_{M_\varphi} d'$, it is also true that either $H_s$ or $K_s$ divides $d'$ in $M_\varphi$. Assume first that $H_s \mid_{M_\varphi} d'$. Since $H_s$ is an $M_s$-projection and $d' \in M$, it follows from part~(2) of Proposition~\ref{prop:divisibility relation of M-projections} that $s \mid_M d'$. Now, after writing $d' = s + t$ for some $t \in M$, we see that $t$ must be divisible in $M$ by an element of $B_\epsilon \cup C_\epsilon$ as, otherwise, $t \in \langle A_\epsilon \rangle$ and so $v_{\mathsf{d}(s)}(s) = v_{\mathsf{d}(s)}(d' - t) \ge 0$, which is not possible: as a result, $d' = s+t > 2(1-\epsilon) > 1$, which contradicts that $d' \in (c_1, b_1)$. We can similarly generate a contradiction if we assume that $K_s \mid_{M_\varphi} d'$.
	\smallskip
	
	\noindent \textsc{Case 1.2:} $\text{supp} \, a_1(X) \not\subseteq M$. Take $d$ to be the largest exponent in $\text{supp} \, a_1(X)$ such that $d \notin M$. Observe that, for some $s \in S$, the $M_s$-projection in the $\varphi$-lifting decomposition of $d$ is nonzero. Because $d_2, \dots, d_\ell \in \langle A_\epsilon \rangle \subseteq M$, the element $d' := d + \sum_{i=2}^\ell d_i$ also has a nonzero $M_s$-projection in its $\varphi$-lifting decomposition, and so $d' \notin M$. Therefore the fact that $b_1, c_1 \in M$ implies that $b_1 > d' > c_1$. As in the previous case, after unfolding the right-hand side of~\eqref{eq:factorization}, at least two terms of degree~$d'$ must be produced, and so there exists $d'_1 \in \text{supp} \, a_1(X)$ with $d'_1 > d$ such that $d'_1 \mid_{M_\varphi} d'$. The maximality of $d$ now implies that $d'_1 \in M$. In this case, we would be able to find $d'_i \in \text{supp} \, a_i(X)$ for every $i \in \ldb 2,\ell \rdb$ such that $d' = \sum_{i=1}^\ell d'_i$. However, from $d'_1 \in M$ and $d'_2, \dots, d'_\ell \in \bigcup_{i=2}^\ell \, \text{supp} \, a_i(X) \subseteq \langle A_\epsilon \rangle \subseteq M$, we obtain a contradiction with the fact that $d' \notin M$.
	\medskip	
	
	\noindent \textsc{Case 2:} $o_j \notin M$ for some $j \in \ldb 1,\ell \rdb$. In order to generate the needed contradiction for this case, we first need to establish the following claim.
	\smallskip
	
	\noindent \textsc{Claim 2.} $2 a \nmid_{M_\varphi} b_1$ and $2 a \nmid_{M_\varphi} c_1$ for any $a \in A_\epsilon$.
	\smallskip
	
	\noindent \textsc{Proof of Claim 2.} Suppose, towards a contradiction, that $2a_k \mid_{M_\varphi} b_1$ for some $k \in \nn_{\ge 2}$. Since both $2 a_k$ and $b_1$ belong to $M$, it follows from part~(1) of Proposition~\ref{prop:divisibility relation of M-projections} that $2a_k \mid_M b_1$. Then we can write $b_1 = 2a_k + e$ for some $e \in M$, and after applying $v_{\mathsf{d}(b_1)}$-adic valuation map on both sides of the latter equality, we find that $e$ is divisible by an element of $B_\epsilon$, namely, $b_j$ for some $j \ge 2$. In light of~\eqref{eq:def of the b_j's and c_j_s}, we can write $b_1 - b_j = a_2 + \dots + a_j$. It follows from part~(2) of Lemma~\ref{lem:atoms of the non-atomic monoid M} that the element $a_2 + \dots + a_j$ has a unique additive factorization in $\langle A_\epsilon \rangle$. Thus,
	\[
		2a_k + (e - b_j) = b_1 - b_j = a_2 + \dots + a_j < \frac{\epsilon}8 < \inf B_\epsilon \cup C_\epsilon,
	\]
	and so no element of $B_\epsilon \cup C_\epsilon$ can divide $e - b_j$ in $M$, which implies that $e - b_j \in \langle A_\epsilon \rangle$. Therefore, after writing $e - b_j$ as a sum of atoms in the atomic monoid $\langle A_\epsilon \rangle$, we obtain an additive factorization of $a_2 + \dots + a_j$ in $\langle A_\epsilon \rangle$ having at least two copies of the atom $a_k$, which contradicts that $a_2 + \dots + a_j$ has a unique additive factorization in $\langle A_\epsilon \rangle$. The fact that  $2 a \nmid_{M_\varphi} c_1$ for any $a \in A_\epsilon$ can be handled similarly. Thus, the claim is established.
	\smallskip
	
	Now fix an index $j \in \ldb 1, \ell \rdb$ such that $o_j \notin M$. Then there exists $s \in S$ such that the $M_s$-projection $\mathsf{p}_s(o_j)$ in the $\varphi$-lifting decomposition of~$o_j$ is nonzero. Since $\sum_{i=1}^\ell o_i = c_1 \in M$, we can take $k \in \ldb 1,\ell \rdb \setminus \{j\}$ such that the $M_s$-projection $\mathsf{p}_s(o_k)$ in the $\varphi$-lifting decomposition of $o_k$ is also nonzero. Therefore it follows from the inequalities in~\eqref{eq:aux interval for s in B} and~\eqref{eq:aux interval for s in C} that
	\[
		o_j \ge \mathsf{p}_s(o_j) \ge \min M_s^\bullet > \frac12 - \frac32 \epsilon \quad \text{ and } \quad 	o_k \ge \mathsf{p}_s(o_k) \ge \min M_s^\bullet > \frac12 - \frac32 \epsilon.
	\]
	This, along with $o_j + o_k \le c_1 < 1$, ensures that both $o_j$ and $o_k$ belong to the interval $\big( \frac12 - \frac32 \epsilon, \frac12 + \frac32 \epsilon \big)$. As a result, the fact that $\mathsf{p}_s(o_j), \mathsf{p}_s(o_k) \in M_s^\bullet$ implies that $\mathsf{p}_s(o_j), \mathsf{p}_s(o_k) \in \{H_s, K_s\}$, which is in turn a subset of $\big( \frac12 - \frac32 \epsilon, \frac12 + \frac32 \epsilon\big)$ because $H_s + K_s = s < 1$. Hence, for any $r \in S \setminus \{s\}$, the $M_r$-projection $\mathsf{p}_r(o_j)$ in the $\varphi$-lifting decomposition of $o_j$ satisfies $\mathsf{p}_r(o_j) \le o_j - \mathsf{p}_s(o_j) < 3\epsilon < \frac3{10} < \min M_r^\bullet$ (as we have already seen that $\min M_r^\bullet > \frac13$), and so $\mathsf{p}_r(o_j) = 0$. In a similar way, we can check that, for any $r \in S \setminus \{s\}$, the $M_r$-projection in the $\varphi$-lifting decomposition of $o_k$ is zero. Now observe that since $c_1$ is not divisible in $M$ by any element of $B_\epsilon$, it follows from part~(2) of Proposition~\ref{prop:divisibility relation of M-projections} that $s \in C_\epsilon$.
	
	As we just did with the orders $o_j$ and $o_k$, let us now obtain some information about the degrees $d_j$ and~$d_k$ as well as their corresponding projections. Since $1 > b_1 \ge d_j + d_k$, the inequalities $d_j > o_j > \frac12 - \frac32 \epsilon$  and $d_k > o_k > \frac12 - \frac32 \epsilon$ guarantee that $d_j, d_k \in \big( \frac12 - \frac32 \epsilon, \frac12 + \frac32 \epsilon \big)$. Hence, for each  $i \in \ldb 1, \ell \rdb \setminus \{j,k\}$, the fact that $\sum_{i=1}^\ell d_i = b_1 < 1$ implies that
	\[
		\max \, \text{supp} \, a_i(X) = d_i \le 1 - (d_j + d_k) < 3\epsilon < \frac3{10} < \frac13,
	\]
	and so $\text{supp} \, a_i(X) \subseteq \langle A_\epsilon \rangle_{< 1/3}$ by virtue of Claim~1. In addition, the fact that $d_j, d_k \in \big( \frac12 - \frac32 \epsilon, \frac12 + \frac32 \epsilon \big)$ implies that neither $d_j$ nor $d_k$ are divisible in $M_\varphi$ by any element of $B_\epsilon \cup C_\epsilon$. Also, it follows from Claim~2 that $2a \nmid_{M_\varphi} d_j$ and $2a \nmid_{M\varphi} d_k$ for any $a \in A_\epsilon$. This, together with the inequality $\sum_{n \ge 2} a_n < \frac{\epsilon}8$, guarantees that
	\begin{equation} \label{eq:two equalities for p(d_j) and p(d_k)}
		\mathsf{p}(d_j) < \frac{\epsilon}4 \quad \text{and} \quad \mathsf{p}(d_k) < \frac{\epsilon}4,
	\end{equation}
	where $\mathsf{p}(d_j)$ and $\mathsf{p}(d_k)$ are the $M$-projections in the $\varphi$-lifting decompositions of $d_j$ and $d_k$, respectively. Therefore, as $d_j \in \big( \frac12 - \frac32 \epsilon, \frac12 + \frac32 \epsilon \big)$, there exists a unique $t \in S$ such that the $M_t$-projection $\mathsf{p}_t(d_j)$ in the $\varphi$-lifting decomposition of $d_j$ is nonzero. We can similarly conclude that there exists a unique $t' \in S$ such that the $M_{t'}$-projection $\mathsf{p}_{t'}(d_k)$ in the $\varphi$-lifting decomposition of $d_k$ is nonzero. Now, because $\sum_{i=1}^\ell d_i = b_1 \in M$, the fact that $d_i \in \langle A_\epsilon \rangle \subset M$ for every $i \in \ldb 1, \ell \rdb \setminus \{j,k\}$ enforces the equality $t' = t$. Thus, we can write
	\begin{equation} \label{eq:d_j and d_k}
		d_j = \mathsf{p}(d_j) + \mathsf{p}_t(d_j) \quad \text{ and } \quad d_k = \mathsf{p}(d_k) + \mathsf{p}_t(d_k),
	\end{equation}
	where $\mathsf{p}(d_j), \mathsf{p}(d_k) \in \langle A_\epsilon \rangle_{< \epsilon/8}$. Since~$b_1$ is not divisible in $M$ by any element of $C_\epsilon$, it follows from part~(2) of Proposition~\ref{prop:divisibility relation of M-projections} that $t \in B_\epsilon$.
	
	We are in a position to finish our proof. For each $i \in \ldb 1, \ell \rdb \setminus \{j,k\}$, the inclusion $\text{supp} \, a_i(X) \subset \langle A_\epsilon \rangle$ ensures that the $M_t$-projection of $d_i$ in its $\varphi$-lifting decomposition equals zero. Now, as the $M_t$-projection of $b_1 = \sum_{i=1}^\ell d_i$ equals zero, we see that $\mathsf{p}_t(d_j) + \mathsf{p}_t(d_k) \in \nn t$, and so the fact that $\mathsf{p}_t(d_j) + \mathsf{p}_t(d_k) \le b_1 < 1$ implies that $\mathsf{p}_t(d_j) + \mathsf{p}_t(d_k) = t$. Hence the equality
	\[
		\{\mathsf{p}_t(d_j), \mathsf{p}_t(d_k)\} = \{H_t, K_t\}
	\]
	holds. We have previously observed that $\mathsf{p}_s(o_j), \mathsf{p}_s(o_k) \in \{H_s, K_s\}$ and also that $s \in C_\epsilon$ and $t \in B_\epsilon$. As $t \in B_\epsilon$, it follows from~\eqref{eq:aux interval for s in B} that $H_t$ belongs to an $\frac{\epsilon}2$-neighborhood of $\frac12 - \epsilon$ while $K_t$ belongs to an $\frac{\epsilon}2$-neighborhood of $\frac 12 + \epsilon$. On the other hand, as $s \in C_\epsilon$, it follows from~\eqref{eq:aux interval for s in C} that both $H_s$ and $K_s$ belong to an $\frac{\epsilon}4$-neighborhood of $\frac 12$. If $\mathsf{p}_t(d_j) = H_t$, then it follows from~\eqref{eq:two equalities for p(d_j) and p(d_k)} and~\eqref{eq:d_j and d_k} that %$d_j \ge d_k$, then
	\[
		d_j = \mathsf{p}(d_j) + \mathsf{p}_t(d_j) <  \frac{\epsilon}4 + H_t < \frac12 - \frac{\epsilon}4 < \mathsf{p}_s(o_j) < o_j,
	\]
	which is a contradiction as $d_j$ and $o_j$ are the degree and order, respectively, of the same polynomial expression of $R[M_\varphi]$. We can arrive to a similar contradiction if we assume that $\mathsf{p}_t(d_k) = H_t$.
\end{proof}

\begin{remark}
	The monoid $M_\varphi$ in the proof of our main theorem is not strongly atomic because, by virtue of Lemmas~\ref{lem:k-MCD transfer under lift construction} and~\ref{lem:a monoid that is not 2-MCD}, it is not a 2-MCD monoid. Hence $M_\varphi$ is another example of an atomic Puiseux monoid that is not strongly atomic (see Example~\ref{ex:an atomic PM that is not strongly atomic}).
\end{remark}

The only known examples of integral domains that are Furstenberg but not atomic are the ring $\text{Hol}(\cc)$ consisting of all entire functions (first pointed out in \cite[Example~4.19]{pC17}) and rings of integer-valued polynomials (see \cite[Lemma~16]{nL19} and \cite[Example~5.2]{GL22}). It is well known that the ring $\text{Hol}(\cc)$ has infinite dimension and also that rings of integer-valued polynomials have dimension at least~$2$. We conclude this paper with an example of a one-dimensional monoid algebra that is Furstenberg but not atomic.

\begin{example}
	Let $M_\varphi$ be the monoid in the proof of our main theorem and set $M_A := \langle A_\epsilon \rangle$. One can readily argue that the monoid $M_d := \big\langle \frac{1}{\mathsf{d}(a_n)} : n \ge 2 \big\rangle$ satisfies the ACCP. This, along with the fact that $M_d$ is reduced, ensures that every submonoid of $M_d$ satisfies the ACCP. Hence $M_A$ satisfies the ACCP. It follows from~\cite[Theorem~13]{AJ15} that the monoid algebra of $M_A$ over any field also satisfies the ACCP and is, therefore, a Furstenberg domain.
	
	It turns out that if we consider the monoid algebra of $M_\varphi$ (instead of $M_A$) over any field, we obtain a Furstenberg domain that is not atomic. To argue this, fix a field $F$. Since $M_\varphi$ is a Puiseux monoid, it has rank $1$ and so the monoid algebra $F[M_\varphi]$ is one-dimensional. We have already seen in the proof of our main theorem that $F[M_\varphi]$ is not atomic. To argue that $F[M_\varphi]$ is a Furstenberg domain, assume towards a contradiction, that there exists a nonzero nonunit polynomial expression $g(X)$ in $F[M_\varphi]$ that is not divisible by any irreducible. Therefore there exist $\ell \in \nn$ with $\ell > 3\deg g(X)$ and nonunits $g_1(X), \dots, g_\ell(X) \in F[M_\varphi]$ such that $g(X) = g_1(X) \cdots g_\ell(X)$. Because $\ell > 3\deg g(X)$, we can take $i \in \ldb 1,\ell \rdb$ such that $\deg g_i(X) < \frac13$. Since $g(X)$ is not divisible by any irreducible in $F[M_\varphi]$, the same holds for $g_i(X)$. Thus, after replacing $g(X)$ by $g_i(X)$, we can assume that $\deg g(X) < \frac13$.  Therefore $\text{supp} \, g(X) \subseteq M_\varphi \cap \qq_{< 1/3}$, and so it follows from Claim~1 in the proof of our main theorem that $g(X) \in F[M_A]$. Since $F[M_A]$ is a Furstenberg domain, there exists an irreducible $a(X)$ of $F[M_A]$ such that $a(X) \mid_{F[M_A]} g(X)$. Now observe that by Claim~1 in the proof of our main theorem every polynomial expression in $F[M_\varphi]$ with degree less than $\frac13$ must belong to $F[M_A]$, which implies that each irreducible of $F[M_A]$ with degree less than $\frac13$ is also an irreducible of $F[M_\varphi]$. Hence $a(X)$ is an irreducible of $F[M_\varphi]$ that divides $g(X)$, which is a contradiction. Hence we conclude that $F[M_\varphi]$ is a Furstenberg domain.
\end{example}

\bigskip
%%%%%%%%%%%%%%%
%%%%%%%%%%%%%%%
\section*{Acknowledgments}

During the preparation of the present paper both authors were part of the PRIMES program at MIT, and they would like to thank the directors and organizers of the program for making this collaboration possible. Both authors are grateful to an anonymous referee for helpful comments. The first author kindly acknowledges support from NSF under the award DMS-2213323.

\bigskip
%%%%%%%%%%%%%%%
%%%%%%%%%%%%%%%
\section*{Conflict of Interest Statement}

On behalf of all authors, the corresponding author states that there is no conflict of interest related to this paper.

\bigskip
%%%%%%%%%%%%%%
%%%%%%%%%%%%%%

\end{document}